\documentclass{amsart}

\usepackage{amsmath, amssymb, amsthm, amsfonts, amsbsy, amscd}
\usepackage{mathrsfs, latexsym, eucal}

\usepackage{graphicx}
\usepackage{color}
\usepackage{verbatim}
\usepackage[hidelinks]{hyperref}

\newtheorem{corollary}{Corollary}

\newtheorem{lemma}{Lemma}
\newtheorem{proposition}{Proposition}
\newtheorem{remark}{Remark}
\newtheorem{theorem}{Theorem}

\numberwithin{equation}{section}

\newcommand{\vol}{\mathrm{Vol}}
\newcommand{\ric}{\mathrm{Ric}}

\newcommand{\lr}[2]{\langle{{#1},{#2}}\rangle}

\newcommand{\rict}{\mathring{\operatorname{Ric}}}
\newcommand{\diam}{\mathrm{diam}(M,g)}
\newcommand{\lrsf}{\lr{\nabla R}{\nabla f}}

\begin{document}
	\title[Four-dimensional compact quasi-Einstein manifolds]{Diameter estimates and Hitchin--Thorpe inequality for four-dimensional compact quasi-Einstein manifolds }
	\author{Samuel Belo}

	\address{Departamento de Matem\'atica, Universidade Federal do Cear\'a - UFC, Campus do Pici, Av. Humberto Monte, Bloco 914,
		60455-760, Fortaleza - CE, Brazil}\email{sanetto@alu.ufc.br}

	\thanks{S. Belo was partially supported by CAPES/Brazil - Finance Code 001}

	\keywords{$m$-quasi-Einstein manifolds, diameter bound, oscillation of the potential function, Hitchin--Thorpe inequality, Ricci Soliton}

	\subjclass[2020]{Primary 53C25; Secondary 53C20, 53C21}
	
	\begin{abstract}	
		We study compact $m$-quasi-Einstein manifolds and derive geometric estimates relating the oscillation of the potential function to the diameter of the manifold. We obtain lower bounds for the diameter in terms of the oscillation of the potential function. As an application in dimension four, we derive diameter conditions ensuring that compact $m$-quasi-Einstein manifolds satisfy the Hitchin--Thorpe inequality. Our results extend diameter estimates in smooth metric measure spaces and are consistent with known bounds in the limiting case corresponding to Ricci solitons. Finally, we provide a volume estimate involving the oscillation.
	\end{abstract}
	\maketitle
	\section{Introduction}
	
	Let $(M^n,g)$ be a complete Riemannian manifold and let $f\in C^\infty(M)$ be a smooth function. 
	The $m$-Bakry–Émery Ricci tensor is defined by
	\begin{align}\label{eq:ricci_bakry_emery}
		\operatorname{Ric}_f^m = \operatorname{Ric} + \nabla^2 f - \frac{1}{m} df\otimes df,
	\end{align}
	where $m>0$ is a constant.
	
	When this tensor is a constant multiple of the metric, the manifold is called 
	an $m$-quasi-Einstein manifold. In other words, $(M^n,g,f,m)$ satisfies
	\begin{align}\label{eq:quasi_einstein}
		\operatorname{Ric}_f^m = \lambda g,
	\end{align}
	for some constant $\lambda$.
	
	The $m$-quasi-Einstein metrics arise naturally in the study of smooth metric 
	measure spaces and Einstein warped products. In particular, when $m$ is a 
	positive integer, an $m$-quasi-Einstein manifold corresponds to the base of an 
	Einstein warped product whose fiber has dimension $m$, see \cite{Besse, catino2013, He2012,  kim2003, mastrolia2014, rimoldi2011, wang2014, catino2012}. 
	Although this geometric condition can be formulated for a general vector field $X$ 
	by replacing the Hessian with the Lie derivative $\frac{1}{2}\mathcal{L}_X g$, see \cite{Barros2012, Limoncu2010}, 
	we restrict our attention to the gradient case, which is the primary focus in the compact setting. 
	From the analytic point of view, these metrics can also be interpreted as a finite-dimensional 
	analogue of gradient Ricci solitons, which correspond formally to the limiting case $m=\infty$, see \cite{cao2010, Case2011, ChengRibeiroZhou2023, hamilton1995}. Although $m$-quasi-Einstein manifolds and gradient Ricci solitons share formal similarities, they exhibit fundamental geometric differences, as illustrated by specific examples; see, e.g., \cite{He2012, barros2014, bohm1998, Case2011}.
	
	A complete $m$-quasi-Einstein manifold is compact if and only if $\lambda > 0$ and $m < \infty$, see \cite{bakry1985, qian1997}. Therefore, in this case, the study of quasi-Einstein metrics naturally reduces to the compact setting. From now on, all compact manifolds considered in this paper are assumed to be connected and without boundary (closed). Rigidity phenomena are prevalent in this context and severely restrict the existence of non-trivial solutions. For instance, if $\lambda \le 0$, it is a well-known fact that any compact $m$-quasi-Einstein manifold is trivial, meaning its potential function is constant and the underlying metric is Einstein. A similar triviality occurs if the scalar curvature is constant. For foundational rigidity results and recent classifications under these conditions, we refer to \cite{Case2011, Costa2025, diogenes2022compact, diogenes2022remarks, he2014warped}. Beyond the closed setting, an extensive literature explores non-compact quasi-Einstein manifolds and generalizations involving manifolds with boundary. For related rigidity and classification theorems in these contexts, we refer to \cite{catino2013, Cheng2020, Diogenes2025, He2012, ribeiro2021, wang2011, bohm1999} and the references therein.
	
	In dimension two, every compact solution is trivial, since the quasi-Einstein equation reduces essentially to the Einstein condition. A similar rigidity phenomenon occurs in dimension three, where every compact quasi-Einstein manifold is 
	necessarily Einstein, see \cite{Case2011}. In dimension four, non-trivial compact quasi-Einstein metrics do exist. Lü, Page, and Pope~\cite{LuPagePope2004} constructed a continuous family of compact quasi-Einstein metrics, depending on the parameter $m$, on the manifold $\mathbb{CP}^2 \# \overline{\mathbb{CP}}^{\,2}$. These metrics arise from a cohomogeneity-one construction closely related to the geometry of Einstein metrics and Ricci solitons on similar manifolds. They are not Kähler, although they remain closely connected to Kähler geometry through a conformal structure. In the limit $m\to\infty$, this family converges to the well-known Koiso--Cao gradient shrinking Ricci soliton \cite{koiso1993}.
	
	Another notable example of a compact four-dimensional Ricci soliton is the one constructed by Wang and Zhu on $\mathbb{CP}^2 \# 2\overline{\mathbb{CP}}^{\,2}$~\cite{WangZhu2004}. However, the existence of a corresponding quasi-Einstein metric on this space remains an open problem (see, for instance, \cite{Batat2015}). At present, the Lü--Page--Pope construction provides the only explicit family of non-trivial compact quasi-Einstein metrics in dimension four, illustrating the restrictive nature of the quasi-Einstein condition. To the best of our knowledge, no general classification or further explicit constructions are currently available.
	
	Even in the compact case and in dimension four, the interaction between the geometry of curvature and the global topology of the manifold is particularly rich. A fundamental manifestation of this relationship is the Hitchin--Thorpe inequality, which establishes a remarkable relation between the Euler characteristic $\chi(M)$ and the signature $\tau(M)$ of a manifold:
	\begin{align}\label{eq:hitchin_thorpe}
		2\chi(M) \pm 3\tau(M) \ge 0 .
	\end{align}
	The rigid geometric condition of being Einstein is sufficient to impose this topological restriction, providing an obstruction to the existence of Einstein metrics, see \cite{thorpe1969,Hitchin1974}. However, the converse is not generally true, which motivates the search for other geometric conditions that also yield the Hitchin--Thorpe inequality.
	
	In this context, H.-D. Cao raised the following question for Ricci solitons: \textit{“Is the Hitchin--Thorpe inequality valid for compact four-dimensional gradient shrinking Ricci solitons?”}, see \cite{cao2010}. In the recent years, some partial answers have been obtained, see, for instance, \cite{lima2013, FernandezLopez2010, tadano2018, zhang2012, ChengRibeiroZhou2023, brasil2014}.
	
	Since quasi-Einstein manifolds arise as a natural generalization of Einstein manifolds and are closely related to Ricci solitons, it is natural to investigate whether the Hitchin--Thorpe inequality also holds in this setting. In particular, one may ask under which geometric or analytical conditions a quasi-Einstein manifold must satisfy the Hitchin--Thorpe inequality.
	
	We establish several sufficient conditions under which compact quasi-Einstein manifolds satisfy the Hitchin--Thorpe inequality. The first result reads as follows. 
	
	\begin{proposition}\label{proposition1.1}
		Let $(M^4,g,f,m)$ be a compact quasi-Einstein manifold of dimension 4 with $m>1$. If the scalar curvature $R$ satisfies the following integral inequality:
		\begin{align}\label{Theo1Form1}
			\int_M R^2 dV_g \leq \dfrac{24(m+1)}{m + 2}\lambda^2 \vol(M),
		\end{align}
		then $M^4$ satisfies the Hitchin--Thorpe inequality.
	\end{proposition}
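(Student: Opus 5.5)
The plan is to combine the Gauss--Bonnet--Chern and Hirzebruch signature formulas with an integral identity for $|\rict|^2$ specific to quasi-Einstein metrics. For a closed oriented $4$-manifold,
\begin{align*}
8\pi^2\chi(M)=\int_M\Big(|W|^2-\tfrac12|\rict|^2+\tfrac{R^2}{24}\Big)dV_g,\qquad 12\pi^2\tau(M)=\int_M\big(|W^+|^2-|W^-|^2\big)dV_g .
\end{align*}
Since $|W|^2=|W^+|^2+|W^-|^2$, this gives
\begin{align*}
8\pi^2\big(2\chi(M)\pm3\tau(M)\big)=\int_M\Big(4|W^\pm|^2-|\rict|^2+\tfrac{R^2}{12}\Big)dV_g\ \ge\ \int_M\Big(\tfrac{R^2}{12}-|\rict|^2\Big)dV_g .
\end{align*}
Hence it suffices to show that, under \eqref{Theo1Form1}, $\int_M|\rict|^2\,dV_g\le\frac1{12}\int_M R^2\,dV_g$.

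To control $\int_M|\rict|^2$, I will use the structure equations of \eqref{eq:quasi_einstein}. Tracing gives $R+\Delta f-\frac1m|\nabla f|^2=4\lambda$. Taking the divergence of \eqref{eq:quasi_einstein} and using the contracted Bianchi identity gives the standard formula
\[
\tfrac12\nabla R=\tfrac{m-1}{m}\ric(\nabla f)+\tfrac1m\big(R-3\lambda\big)\nabla f .
\]
Writing $\rict=-\big(\nabla^2 f-\frac{\Delta f}{4}g\big)+\frac1m\big(df\otimes df-\frac{|\nabla f|^2}{4}g\big)$, I compute
\[
\int_M|\rict|^2=-\int_M\lr{\rict}{\nabla^2 f}+\frac1m\int_M\rict(\nabla f,\nabla f).
\]
For the first term, integrate by parts and use $\operatorname{div}\rict=\frac14\nabla R$, which gives $\frac14\int_M\lrsf$. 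For the second term, $\rict(\nabla f,\nabla f)=\ric(\nabla f,\nabla f)-\frac R4|\nabla f|^2$. I eliminate $\ric(\nabla f,\nabla f)$ using the gradient formula above paired with $\nabla f$, which is where $m>1$ enters through the factor $\frac{m}{m-1}$.

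The result expresses $\int_M|\rict|^2$ through $\int_M\lrsf$ and $\int_M R|\nabla f|^2$. I then convert both into integrals of $R^2$, $\lambda R$ and $\lambda^2$. For this I integrate $\int_M\lrsf=-\int_M R\Delta f$ by parts and substitute $\Delta f=4\lambda-R+\frac1m|\nabla f|^2$ from the trace equation. This gives a linear relation between $\int_M\lrsf$ and $\int_M R|\nabla f|^2$; a second relation comes from the gradient formula. Solving this $2\times2$ system should give an identity of the form
\[
\int_M|\rict|^2=a_m\int_M R^2+b_m\lambda\int_M R+c_m\lambda^2\vol(M).
\]
The main obstacle is that this system may not close by itself, because $|\nabla f|^2$ terms not multiplied by $R$ (such as $\int_M|\nabla f|^2$ and $\int_M|\nabla f|^4$) may appear. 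If so, my first attempt will be the substitution $u=e^{-f/m}$, for which $\ric-\lambda g=\frac mu\nabla^2u$. I would then use the known first integral $u\Delta u+(m-1)|\nabla u|^2+\lambda u^2=\mu$ to trade gradient terms for curvature terms. A fallback is to use only an inequality, dropping a nonnegative term of the form $\int_M|\nabla f|^2(\cdot)\ge0$.

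Finally, I will use $\int_M R\le4\lambda\vol(M)$, or the sharper relation from integrating the trace equation, $\int_M R=4\lambda\vol(M)-\frac1m\int_M|\nabla f|^2$, to handle the linear term. Inserting \eqref{Theo1Form1} should then yield $\int_M|\rict|^2\le\frac1{12}\int_M R^2$. The coefficient $\frac{24(m+1)}{m+2}$ is precisely what the identity should produce, and I will check that it reduces to $24$ as $m\to\infty$, consistent with the Ricci soliton case. Given the final inequality, the Gauss--Bonnet--Chern/signature estimate above yields $2\chi(M)\pm3\tau(M)\ge0$.
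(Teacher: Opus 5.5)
Your reduction to proving $\int_M|\rict|^2\le\frac1{12}\int_M R^2\,dV_g$ is correct. Your integration by parts, using $\operatorname{div}\rict=\frac14\nabla R$ and the gradient identity to eliminate $\ric(\nabla f,\nabla f)$, is a valid alternative to the paper's integration of the $\Delta R$ formula, and both give the same identity. The gap is in what comes next: the system does not close. After using $\int_M\lrsf=-\int_M R\Delta f$ and the trace equation, you arrive at
\[
\int_M|\rict|^2=\frac{m+1}{4(m-1)}\int_M R^2-\frac{4(m+1)}{m-1}\lambda^2\vol(M)-\frac{1}{m(m-1)}\int_M\Big((m-2)\lambda+\frac{m+2}{2}R\Big)|\nabla f|^2 .
\]
Nothing removes $\int_M R|\nabla f|^2$. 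The first integral for $u=e^{-f/m}$ only trades it for integrals weighted by $e^{2f/m}$. So your fallback is not optional.

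The fallback needs an ingredient your proposal never supplies: a pointwise lower bound on $R$ that makes the weight $(m-2)\lambda+\frac{m+2}{2}R$ nonnegative. For $1<m<2$ the term $(m-2)\lambda$ is negative, so even $R>0$ is not enough. The paper uses the Case--Shu--Wei bound $R\ge\frac{12}{m+3}\lambda$, which follows from the maximum principle applied to the $\Delta R$ equation at a minimum point of $R$. It gives $(m-2)\lambda+\frac{m+2}{2}R\ge\frac{(m+1)(m+6)}{m+3}\lambda>0$. Once that term is discarded, the required inequality becomes $\frac{m+2}{6(m-1)}\int_M R^2\le\frac{4(m+1)}{m-1}\lambda^2\vol(M)$, which is exactly the hypothesis.

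There is also a sign error in your handling of the linear term. Integrating the trace equation gives $\int_M R=4\lambda\vol(M)+\frac1m\int_M|\nabla f|^2\ge 4\lambda\vol(M)$, not $\le$. If you trade $\lambda\int_M|\nabla f|^2$ for $\lambda\int_M R$, its coefficient in $\int_M|\rict|^2$ becomes $\frac{2-m}{m-1}$, which is positive for $1<m<2$. You would then need an upper bound on $\int_M R$, which is not available. The correct move is to keep $\lambda\int_M|\nabla f|^2$ and $\int_M R|\nabla f|^2$ together as one weighted integral and control its sign with the scalar curvature bound, as above.
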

	
	The next result is inspired by a recent partial answer to Cao’s Problem for compact Ricci solitons obtained by Cheng, Ribeiro and Zhou \cite{ChengRibeiroZhou2023}.
	
	\begin{theorem}\label{theorem2}
		Let $(M^4,g,f,m)$ be a compact quasi-Einstein manifold of dimension 4 with $m>1$. Then, the following estimate holds:
		\begin{align}\label{Desig.Teo.2}
			8 \pi^2\chi(M) &\geq \int_M |W|^2 dV_g + \dfrac{m^2\lambda^2 }{6(m - 1)(m + 3)}\vol(M) \left(5 + \dfrac{8}{m} - \dfrac{12}{m^2} - e^{\frac{f_{osc}(m + 2)}{m}}\right)
		\end{align}
		where $f_{osc} = f_{\max} - f_{\min}$, with $f_{\max}$ and $f_{\min}$ being the maximum and minimum of the potential function on $M^4$. Furthermore, equality holds if and only if $f$ is constant.
	\end{theorem}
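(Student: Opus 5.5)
The plan is to start from the Gauss--Bonnet--Chern formula in dimension four, written in the normalization in which the Weyl term appears with coefficient one,
\begin{align*}
8\pi^2\chi(M)=\int_M |W|^2\,dV_g-\frac12\int_M|\rict|^2\,dV_g+\frac1{24}\int_M R^2\,dV_g .
\end{align*}
(If the paper's convention for $|W|^2$ differs, the constants are adjusted accordingly.) The theorem then reduces to a lower bound for $\frac1{24}\int_M R^2-\frac12\int_M|\rict|^2$ in terms of $\lambda^2\vol(M)$ and $f_{osc}$.

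To handle the traceless Ricci term, I would pass to $u=e^{-f/m}$. In this variable the quasi-Einstein equation becomes $\nabla^2u=\frac{u}{m}(\ric-\lambda g)$. Consequently $\rict=\frac{m}{u}\mathring{\nabla^2 u}$. Next I would use the standard first integrals from Case--Shu--Wei:
\begin{itemize}
\item the contracted Bianchi identity $\operatorname{div}\ric=\frac12\nabla R$;
\item the trace identity $R+\Delta f-\frac1m|\nabla f|^2=4\lambda$;
\item the constant $\mu$ with $u\Delta u+(m-1)|\nabla u|^2+\lambda u^2=\mu$.
\end{itemize}
Integrating by parts in a weighted form $\int_M u^{k}|\rict|^2\,dV_g$, with the power $k$ chosen so the divergence terms close up, should give an exact identity. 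It expresses the weighted traceless Ricci energy through $\int_M u^{k'}R^2$, $\int_M u^{k'}R\lambda$ and $\lambda^2\int_M u^{k'}$. The factor $\frac{1}{(m-1)(m+3)}$ in the statement strongly suggests combining the identity with the relation involving $\mu$, which carries the coefficient $m-1$. The polynomial $5+\frac8m-\frac{12}{m^2}$ should come from completing a square in $R$ against $\lambda$, of the kind already implicit in Proposition~\ref{proposition1.1} with its constant $\frac{24(m+1)}{m+2}$.

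Because the identities are weighted by powers of $u$, I would then remove the weights using $u_{\max}/u_{\min}=e^{f_{osc}/m}$. Total exponent $m+2$ on this ratio produces exactly $e^{\frac{(m+2)f_{osc}}{m}}$. Concretely, I would bound $\int_M u^{a}\,\phi\le u_{\max}^{a}\int_M\phi$ for the positive term and $\int_M u^{a}\phi\ge u_{\min}^{a}\int_M\phi$ for the negative term. Here the weight is distributed so that the leftover ratio is $(u_{\max}/u_{\min})^{m+2}$. After this, I would assemble everything into Gauss--Bonnet. The negative contribution $-\frac12\int|\rict|^2$ should be bounded below by a multiple of $-\lambda^2\vol(M)\,e^{(m+2)f_{osc}/m}$ plus $R^2$-terms. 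The $R^2$-terms should combine with $\frac1{24}\int R^2$ into a nonnegative square plus $\frac{m^2\lambda^2}{6(m-1)(m+3)}(5+\frac8m-\frac{12}{m^2})\vol(M)$.

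For the equality case, equality forces equality in the oscillation comparisons, so $u_{\max}=u_{\min}$ and $f$ is constant. Conversely, if $f$ is constant then the metric is Einstein, $\rict=0$, and one checks directly that the remaining terms match.

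The main obstacle I expect is finding the right weighted integration by parts. One must choose the weight power and the linear combination with the $\mu$-identity so that the sign-indefinite gradient terms, like $\int\langle\nabla u,\nabla R\rangle u^{j}$, cancel exactly. Only then does the result give a clean inequality whose constants reproduce $\frac{m^2}{6(m-1)(m+3)}$ and exponent $m+2$. I would first try the weight $u^{m}$, the natural measure of the warped product, and adjust the power until the cross terms vanish.
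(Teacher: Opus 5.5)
The step you call ``the main obstacle'' is the whole proof. As outlined, the constants $\frac{1}{(m-1)(m+3)}$, $m+2$ and $5+\frac8m-\frac{12}{m^2}$ are guessed, not derived, and the plan as designed cannot reach the stated constant.

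The driving identity is not the $\mu$-identity but item~3 of Lemma~\ref{Lemma1}. Set
$Q=-\frac{2(m-1)}{m}|\ric|^2-\frac2mR^2+\frac{2(m+6)}{m}\lambda R-\frac{24}{m}\lambda^2$. The identity reads $\Delta R-\frac{m+2}{m}\lrsf=Q$, i.e.\ $\operatorname{div}(u^{m+2}\nabla R)=u^{m+2}Q$ with $u=e^{-f/m}$. Your instinct to pick the weight that kills the cross term is right, but that weight is $u^{m+2}$ and it must be applied to this identity. The paper's sublevel-set ODE amounts to
$\frac{m+2}{m}\int_M\lrsf\,dV_g=\int_M Q\,\big(e^{\frac{m+2}{m}(f_{\max}-f)}-1\big)\,dV_g$.
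This follows from $\int_M u^{m+2}Q\,dV_g=0$ together with $\int_M Q\,dV_g=-\frac{m+2}{m}\int_M\lrsf\,dV_g$. The factor $m-1$ comes from the coefficient of $|\ric|^2$ in $Q$. The factor $m+3$ comes from completing the square after $|\ric|^2\ge R^2/4$, which gives the pointwise bound $Q\le\frac{2m}{m+3}\lambda^2$. That pointwise bound must come before the weight is removed. If you strip $u_{\max}$ and $u_{\min}$ off the $R^2$- and $\lambda R$-pieces separately, $e^{(m+2)f_{osc}/m}$ lands in front of $\lambda\int_M R\,dV_g$ instead of $\lambda^2\vol(M)$.

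Even with the right identity, bounding $\int_M|\rict|^2$ itself by the weighted estimate, as your plan does, gives the wrong constant. Concretely,
$\int_M|\rict|^2\le u_{\min}^{-(m+2)}\int_M u^{m+2}|\rict|^2\le\frac{m^2\lambda^2}{(m-1)(m+3)}e^{(m+2)f_{osc}/m}\vol(M)$.
Combined with $\int_M R^2\ge16\lambda^2\vol(M)$, this yields only
\[
8\pi^2\chi(M)\ge\int_M|W|^2dV_g+\frac{m^2\lambda^2}{6(m-1)(m+3)}\vol(M)\Big(4+\frac8m-\frac{12}{m^2}-3e^{\frac{(m+2)f_{osc}}{m}}\Big),
\]
which has three times the stated coefficient in front of the exponential.

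The paper avoids this loss as follows:
\begin{itemize}
\item It first rewrites Gauss--Bonnet--Chern as \eqref{Euler.Grad} in Lemma~\ref{Lemma2}. Then only the term $-\frac{m+2}{12(m-1)}\int_M\lrsf$ passes through the oscillation bound $\int_M\lrsf\le\frac{2m^2\lambda^2}{(m+2)(m+3)}\vol(M)\big(e^{(m+2)f_{osc}/m}-1\big)$. The ``$-1$'' there supplies the extra $1$ in $5+\frac8m-\frac{12}{m^2}$.
\item The remainder is converted exactly via $\int_M\lrsf=\int_M R^2-16\lambda^2\vol(M)-\frac{4\lambda}{m}\int_M|\nabla f|^2-\frac1m\int_M R|\nabla f|^2$, which eliminates $\int_M R^2$.
\item The leftover terms $\frac{(m-10)\lambda}{6m(m-1)}\int_M|\nabla f|^2+\frac{m+2}{6m(m-1)}\int_M R|\nabla f|^2$ do not cancel, contrary to your hope, and the first coefficient is negative for $m<10$. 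They are made nonnegative by the Case--Shu--Wei bound $R\ge\frac{12}{m+3}\lambda$, which is missing from your proposal.
\end{itemize}
Your equality discussion is fine in outline, but it rests on inequalities you never specify.
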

	
	As a consequence of this theorem, we obtain an estimate ensuring the
	validity of the Hitchin--Thorpe inequality for quasi-Einstein manifolds.
	
	\begin{corollary}\label{CorTheorem2}
		Let $(M^4,g,f,m)$ be a compact quasi-Einstein manifold of dimension 4 with $m>1$. If
		\begin{align*}
			f_{\max} - f_{\min} &\leq \dfrac{m}{m+2}\log\left(5 + \dfrac{8}{m} - \dfrac{12}{m^2}\right)	
		\end{align*}
		then $M^4$ satisfies the Hitchin--Thorpe inequality.
	\end{corollary}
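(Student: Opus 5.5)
The plan is to derive the Corollary directly from Theorem \ref{theorem2}, combined with the Gauss--Bonnet--Chern and signature formulas in dimension four. I will use the standard decomposition
\begin{align*}
8\pi^2\chi(M) = \int_M \left(|W|^2 - \tfrac{1}{2}|\rict|^2 + \tfrac{R^2}{24}\right) dV_g,
\qquad
12\pi^2\tau(M) = \int_M \left(|W^+|^2 - |W^-|^2\right) dV_g,
\end{align*}
with $|W|^2 = |W^+|^2 + |W^-|^2$. From these, $2\chi(M)\pm 3\tau(M)\ge 0$ is equivalent to
\begin{align*}
8\pi^2\chi(M) \ge \mp \int_M \left(|W^+|^2 - |W^-|^2\right) dV_g .
\end{align*}
Since $\bigl|\,|W^+|^2-|W^-|^2\bigr| \le |W|^2$ pointwise, it suffices to show $8\pi^2\chi(M)\ge \int_M |W|^2\, dV_g$.

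Theorem \ref{theorem2} gives exactly this, provided the second term on its right-hand side is nonnegative. Because $m>1$ and, in the compact case, $\lambda>0$ (recalled in the introduction), the prefactor $\frac{m^2\lambda^2}{6(m-1)(m+3)}\vol(M)$ is positive. So we need
\begin{align*}
5 + \frac{8}{m} - \frac{12}{m^2} - e^{\frac{f_{osc}(m+2)}{m}} \ge 0 .
\end{align*}
The quantity $5 + 8/m - 12/m^2$ is greater than $1$ for $m>1$: indeed $4 + 8/m - 12/m^2 = 4(m^2+2m-3)/m^2 = 4(m+3)(m-1)/m^2 > 0$. Hence its logarithm is well defined and positive, and the hypothesis on $f_{\max}-f_{\min}$ is meaningful. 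Exponentiating the hypothesis multiplied by $(m+2)/m>0$, which is monotone, gives the displayed inequality.

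Putting these together,
\begin{align*}
8\pi^2\chi(M) \ge \int_M |W|^2\, dV_g \ge 12\pi^2|\tau(M)|,
\end{align*}
which is $2\chi(M)\ge 3|\tau(M)|$, the Hitchin--Thorpe inequality. There is no real obstacle here. The only points needing care are the sign of $\lambda$ and of the factor $(m-1)(m+3)$, and checking that the logarithm's argument exceeds $1$; all of these follow from $m>1$ and compactness.
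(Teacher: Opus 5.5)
Your proof is correct and follows essentially the paper's route: you combine Theorem~\ref{theorem2} with the signature formula and discard nonnegative Weyl terms. The only difference is bookkeeping. The paper drops $2|W^\pm|^2$ directly, while you first isolate $8\pi^2\chi(M)\ge\int_M|W|^2\,dV_g$ and then use $12\pi^2|\tau(M)|\le\int_M|W|^2\,dV_g$. One minor point: the sign of $\lambda$ plays no role, since only $\lambda^2$ enters the prefactor. Your check that $5+8/m-12/m^2=1+4(m+3)(m-1)/m^2>1$ is a nice detail the paper omits.
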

	
	\begin{remark}
		The compact quasi-Einstein metrics constructed by Lü, Page and Pope~\cite{LuPagePope2004} satisfy the oscillation bound obtained in Corollary~\ref{CorTheorem2}. In particular, each element of their family provides an explicit example realizing the estimate for the oscillation of the potential function. An explicit description of the potential function and the corresponding values of its extrema can be obtained using the method developed in \cite{Batat2015}.
	\end{remark}
	
	\begin{remark}
		In the formal limit $m\to\infty$, the quasi-Einstein equation reduces to the gradient Ricci soliton equation. In this case, Proposition~\ref{proposition1.1} recovers the result of Ma~\cite{lima2013} for compact gradient shrinking Ricci solitons. Similarly, the estimate in Corollary~\ref{CorTheorem2} reduces to
		\begin{align*}
			f_{\max} - f_{\min} &\leq \log\left(5\right)
		\end{align*}
		This is precisely the estimate obtained by Cheng, Ribeiro and Zhou in~\cite{ChengRibeiroZhou2023}.
	\end{remark}

	Diameter estimates for quasi-Einstein manifolds and, more generally, for smooth metric measure spaces have been investigated by several authors; see for instance \cite{WeiWylie2007,Shaikh2024, Wu2018, Wang2013, Deng2021, Deng2024, tadano2026compactness, tadano2020compactness, tadano2022riccati, tadano2025myers, tadano2025improvement}. These results typically relate the diameter of the manifold to curvature bounds and properties of the potential function. Motivated by these works, we obtain estimates that connect the diameter of the manifold with the oscillation of the potential function.
	
	In the following result we derive lower bounds for the diameter of the manifold in terms of the oscillation of the potential function and bounds on the Ricci curvature.
	
	Since $M$ is compact, the function $(x,v)\mapsto \operatorname{Ric}(v,v)$ on the unit tangent bundle attains its minimum and maximum. We denote
	\begin{align*}
		c := \min_{|v|=1}\operatorname{Ric}(v,v), \qquad C := \max_{|v|=1}\operatorname{Ric}(v,v).
	\end{align*}
	These constants will be used in the estimates below. Let us also denote $d = \diam$.
	
	\begin{theorem} \label{Theorem3}
		Let $(M^n,g,f,m)$ be a compact non-trivial $m$-quasi-Einstein manifold of dimension $n$ with $m>0$. If $f_{osc} = f_{\max} - f_{\min}$ denotes the oscillation of the potential function $f$, then the following diameter estimates hold:
		\begin{enumerate}
			\item $d \ge \sqrt{\frac{m}{\lambda - c}} \arccos\left( e^{-\frac{f_{osc}}{m}} \right)$,
			\item $d \ge \sqrt{\frac{m}{C - \lambda}} \operatorname{arccosh}\left( e^{\frac{f_{osc}}{m}} \right).$
		\end{enumerate}
	\end{theorem}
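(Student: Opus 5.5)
The plan is to linearize the quasi-Einstein equation by the substitution $u=e^{-f/m}$ and then compare along a geodesic. With $\nabla u=-\frac{u}{m}\nabla f$ one gets
\begin{align*}
\nabla^2 u=-\frac{u}{m}\Big(\nabla^2 f-\frac{1}{m}df\otimes df\Big),
\end{align*}
so \eqref{eq:quasi_einstein} becomes
\begin{align*}
\nabla^2 u=\frac{u}{m}\big(\operatorname{Ric}-\lambda g\big).
\end{align*}
Since $u>0$ is decreasing in $f$, we have $u_{\max}=e^{-f_{\min}/m}$, $u_{\min}=e^{-f_{\max}/m}$, and $u_{\min}/u_{\max}=e^{-f_{osc}/m}$. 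Let $\gamma$ be a unit speed minimizing geodesic joining a maximum point and a minimum point of $u$, with length $L\le d$, and set $h(t)=u(\gamma(t))$. Then
\begin{align*}
h''=\frac{h}{m}\big(\operatorname{Ric}(\gamma',\gamma')-\lambda\big),
\end{align*}
and $c\le \operatorname{Ric}(\gamma',\gamma')\le C$ gives the two differential inequalities
\begin{align*}
h''\ge -\frac{\lambda-c}{m}h,\qquad h''\le \frac{C-\lambda}{m}h.
\end{align*}

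First I need strict signs, $\lambda>c$ and $C>\lambda$.
\begin{itemize}
\item At a maximum point of $u$ we have $\nabla^2u\le 0$, so $\operatorname{Ric}\le\lambda$ there in every direction, and hence $c\le\lambda$. If $c=\lambda$, then $\nabla^2u\ge 0$ everywhere, so $u$ is subharmonic on a closed manifold and therefore constant. That contradicts non-triviality.
\item Symmetrically, at a minimum point of $u$ we get $C\ge\lambda$. Equality would make $u$ superharmonic, hence constant.
\end{itemize}

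For (1), I parametrize $\gamma$ from the maximum point, so $h(0)=u_{\max}$ and $h'(0)=0$. Put $a=\sqrt{(\lambda-c)/m}$ and $\varphi(t)=u_{\max}\cos(at)$. On $[0,\pi/(2a))$ the Wronskian satisfies
\begin{align*}
(h'\varphi-h\varphi')'=h''\varphi-h\varphi''\ge 0,
\end{align*}
and it vanishes at $t=0$. Hence $(h/\varphi)'\ge 0$, which gives $h\ge u_{\max}\cos(at)$. There are two cases.
\begin{itemize}
\item If $aL\ge\pi/2$, then $aL\ge \pi/2 \ge \arccos(e^{-f_{osc}/m})$ trivially.
\item If $aL<\pi/2$, evaluating at $t=L$ gives $u_{\min}\ge u_{\max}\cos(aL)$, that is $\cos(aL)\le e^{-f_{osc}/m}$. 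Therefore $aL\ge\arccos(e^{-f_{osc}/m})$.
\end{itemize}
Since $d\ge L$, this yields (1).

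For (2), I parametrize $\gamma$ from the minimum point, so $h(0)=u_{\min}$ and $h'(0)=0$. Put $b=\sqrt{(C-\lambda)/m}$ and $\psi=u_{\min}\cosh(bt)>0$. Now the Wronskian satisfies
\begin{align*}
(h'\psi-h\psi')'=h''\psi-h\psi''\le 0,
\end{align*}
with zero initial value, so $h\le\psi$ on the whole interval. At $t=L$ this gives $u_{\max}\le u_{\min}\cosh(bL)$, i.e. $\cosh(bL)\ge e^{f_{osc}/m}$. Hence $d\ge L\ge b^{-1}\operatorname{arccosh}(e^{f_{osc}/m})$.

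The main point needing care is the comparison step in (1). The Sturm/Wronskian argument only works while $\varphi>0$, which is why the case split at $aL=\pi/2$ is needed. Establishing the strict inequalities $\lambda>c$ and $C>\lambda$ from non-triviality, as done above, is what makes the constants in the statement finite.
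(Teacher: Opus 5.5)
Your proof is correct and follows the paper's argument step for step. It uses the same substitution $u=e^{-f/m}$ and the same $\cos$/$\cosh$ comparison along a minimizing geodesic between the extremal points; your Wronskian $h'\varphi-h\varphi'$ is exactly the paper's $v^2\phi'$. The case split at $\sqrt{(\lambda-c)/m}\,L=\pi/2$ and the sub/superharmonicity argument for $c<\lambda<C$ are also the paper's. The only cosmetic difference is that you read off $c\le\lambda\le C$ from the Hessian identity $\nabla^2u=\frac{u}{m}(\operatorname{Ric}-\lambda g)$ at the extrema, whereas the paper uses the traced equation together with $nc\le R\le nC$.
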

	
	We also obtain the following mixed estimate for the oscillation of the
	potential function.
	
	\begin{proposition}\label{Prop:MixedEstimate}
		Let $(M^n,g,f,m)$ be a compact non-trivial $m$-quasi-Einstein manifold.
		If the diameter satisfies $d < {\pi}\sqrt{\frac{m}{\lambda-c}}$,
		then 
		\begin{align*}
			e^{f_{osc}/m} \le \cosh\!\left(\sqrt{\frac{C-\lambda}{m}}\frac d2\right) \sec\!\left(\sqrt{\frac{\lambda-c}{m}}\frac d2\right).
		\end{align*}
	\end{proposition}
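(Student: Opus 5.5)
The natural substitution is $u = e^{-f/m}$, which linearizes the quasi-Einstein equation. A direct computation gives
\[
\nabla^2 u = -\frac{u}{m}\Big(\nabla^2 f - \frac1m df\otimes df\Big) = \frac{u}{m}\big(\operatorname{Ric} - \lambda g\big).
\]
Restricting to a unit-speed minimizing geodesic $\gamma$ and setting $\phi(t) = u(\gamma(t))$, we get
\[
\phi'' = \frac{\operatorname{Ric}(\gamma',\gamma') - \lambda}{m}\,\phi .
\]
Since $\phi>0$ and $c \le \operatorname{Ric}(\gamma',\gamma') \le C$, this yields the two-sided differential inequality
\[
-\alpha^2 \phi \le \phi'' \le \beta^2 \phi, \qquad \alpha = \sqrt{\frac{\lambda-c}{m}}, \quad \beta = \sqrt{\frac{C-\lambda}{m}} .
\]
Both constants are positive in the non-trivial case: tracing the equation at extrema of $f$ gives $c<\lambda<C$.

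Let $p$ be a maximum point of $f$ and $q$ a minimum point, and let $\gamma$ be a minimizing geodesic of length $L \le d$ between them. Parametrize $\gamma$ so that it is centred at the midpoint, $\gamma(-L/2) = p$ and $\gamma(L/2) = q$. At the endpoints $\nabla u = 0$, hence $\phi'(\pm L/2) = 0$. We also have $\phi(-L/2) = e^{-f_{\max}/m} = \min u$ and $\phi(L/2) = \max u$, with ratio $\phi(L/2)/\phi(-L/2) = e^{f_{osc}/m}$.

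The plan is a Sturm comparison run from both endpoints toward the midpoint. From the maximum point $q$ of $\phi$, where $\phi' = 0$, the inequality $\phi'' \ge -\alpha^2\phi$ gives
\[
\phi(t) \ge \phi(L/2)\cos\big(\alpha(L/2 - t)\big) \quad \text{as long as } \alpha(L/2-t) < \pi/2 .
\]
To justify it, compare with $\psi(t) = \phi(L/2)\cos(\alpha(L/2-t))$ via the Wronskian $(\phi'\psi - \phi\psi')' \ge 0$ on the interval where $\psi>0$. From the minimum point $p$, the inequality $\phi'' \le \beta^2\phi$ with $\phi'(-L/2)=0$ gives, by the same Wronskian argument with $\cosh$,
\[
\phi(t) \le \phi(-L/2)\cosh\big(\beta(t+L/2)\big).
\]
Evaluating both bounds at the midpoint $t=0$ gives
\[
\phi(L/2)\cos(\alpha L/2) \le \phi(-L/2)\cosh(\beta L/2),
\]
valid when $\alpha L/2 < \pi/2$, i.e.\ $L < \pi/\alpha$. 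This holds because $L \le d < \pi\sqrt{m/(\lambda-c)}$.

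Dividing through, $e^{f_{osc}/m} \le \cosh(\beta L/2)\sec(\alpha L/2)$. The right-hand side is increasing in $L$ on $[0,\pi/\alpha)$, so we may replace $L$ by $d$ to obtain the stated inequality.

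The main obstacle is the comparison step. One must check that the Wronskian argument keeps the correct sign all the way to the midpoint, which requires positivity of the comparison function on the relevant interval. For the cosine bound this is exactly where the hypothesis $d<\pi\sqrt{m/(\lambda-c)}$ enters: it guarantees $\alpha L/2 < \pi/2$. The $\cosh$ comparison needs no such restriction. The remaining point is the vanishing of $\phi'$ at both endpoints, which holds because they are critical points of $f$.
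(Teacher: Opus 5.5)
Your proof is correct and is essentially the paper's argument: apply the cosine comparison for $u=e^{-f/m}$ from the minimum point of $f$ and the $\cosh$ comparison from the maximum point of $f$, evaluate both at the midpoint of a minimizing geodesic, combine, and then replace $L$ by $d$ by monotonicity; your Wronskian $\phi'\psi-\phi\psi'$ is exactly the paper's quantity $v^2(u/v)'$. One small remark: taking the trace at the extrema of $f$ only gives $c\le\lambda\le C$. Strictness needs a maximum-principle step: $\Delta u=\frac1m(R-n\lambda)u$ forces $u$ to be constant if $c=\lambda$ or $C=\lambda$. For this proposition, though, only $\lambda>c$ is used, and that is already implicit in the hypothesis.
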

	
	\begin{remark}
		It is worth noting that the diameter bounds obtained in Theorem~\ref{Theorem3}
		as well as the mixed estimate in Proposition~\ref{Prop:MixedEstimate} are consistent with
		the Ricci soliton case. Indeed, in the formal limit $m\to\infty$,
		the three bounds converge to the diameter estimates appearing
		in \cite[Theorem 1.4]{FernandezLopez2010} for compact four-dimensional Ricci solitons.
	\end{remark}
	
	As a direct consequence of Theorem~\ref{theorem2} and Theorem~\ref{Theorem3}, and using the diameter estimates obtained above, we derive the following conditions ensuring that the Hitchin--Thorpe inequality holds. Let us consider
	\begin{align*}
		D_m = \sqrt[m+2]{5 + \frac{8}{m} - \frac{12}{m^2}}
	\end{align*}
	\begin{corollary} \label{CorTheorem3}
		Let $(M^4,g,f,m)$ be a compact non-trivial $4$-dimensional $m$-quasi-Einstein manifold with $m>1$. If the diameter of $M$ satisfies one of the following conditions
		\begin{enumerate}
			\item $d \le \sqrt{\frac{m}{\lambda - c}} \arccos\left( \frac{1}{D_m} \right);$
			\item $d \le \sqrt{\frac{m}{C - \lambda}} \operatorname{arccosh}\left( D_m \right);$
			\item $d \le 2x_0$, where $x_0$ is the unique real solution in the interval $\left(0, \frac{\pi}{2}\sqrt{\frac{m}{\lambda - c}}\right)$ to the equation:
			$$ \cosh\left( \sqrt{\frac{C - \lambda}{m}} x_0 \right) \sec\left( \sqrt{\frac{\lambda - c}{m}} x_0 \right) = D_m $$
		\end{enumerate}
		then the Hitchin--Thorpe inequality is satisfied. 
	\end{corollary}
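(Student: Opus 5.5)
The plan is to reduce every case to the oscillation criterion of Corollary~\ref{CorTheorem2}. That criterion says the Hitchin--Thorpe inequality holds as soon as
\[
\frac{m+2}{m}\,f_{osc}\le \log\Big(5+\frac{8}{m}-\frac{12}{m^2}\Big).
\]
Taking exponentials, this is equivalent to $e^{f_{osc}/m}\le D_m$, where $D_m$ is the $(m+2)$-th root defined above. Since $m>1$, the quantity $5+8/m-12/m^2$ exceeds $1$, so $D_m>1$. Hence $1/D_m\in(0,1)$ and $\operatorname{arccosh}(D_m)>0$, and the three hypotheses are meaningful. Because $M$ is non-trivial, $f$ is non-constant and the Ricci tensor is not identically $\lambda g$. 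From the identity $\operatorname{Ric}=\lambda g-\nabla^2 f+\frac1m df\otimes df$ one gets $c<\lambda<C$ (the trace of $\nabla^2 f$ integrates to zero against suitable weights). I will take these inequalities as implicit in the statement of Theorem~\ref{Theorem3}.

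For condition (1), invert Theorem~\ref{Theorem3}(1). Since $f_{osc}\ge0$, we have $e^{-f_{osc}/m}\in(0,1]$, and the estimate gives $\arccos(e^{-f_{osc}/m})\le d\sqrt{(\lambda-c)/m}\le \arccos(1/D_m)$. Because $\arccos$ is strictly decreasing on $[0,1]$, this yields $e^{-f_{osc}/m}\ge 1/D_m$, that is, $e^{f_{osc}/m}\le D_m$.

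Condition (2) is handled the same way using Theorem~\ref{Theorem3}(2). We get $\operatorname{arccosh}(e^{f_{osc}/m})\le d\sqrt{(C-\lambda)/m}\le\operatorname{arccosh}(D_m)$, and monotonicity of $\operatorname{arccosh}$ on $[1,\infty)$ gives $e^{f_{osc}/m}\le D_m$.

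For condition (3), use Proposition~\ref{Prop:MixedEstimate}. Set
\[
\phi(x)=\cosh\Big(\sqrt{\tfrac{C-\lambda}{m}}\,x\Big)\sec\Big(\sqrt{\tfrac{\lambda-c}{m}}\,x\Big)
\]
on $[0,\frac{\pi}{2}\sqrt{m/(\lambda-c)})$. First, $\phi$ is a product of two positive, increasing factors, so it is strictly increasing. Second, $\phi(0)=1<D_m$. Third, $\phi(x)\to\infty$ as $x$ tends to the right endpoint. These three facts give existence and uniqueness of $x_0$. Then $d\le 2x_0$ implies $d<\pi\sqrt{m/(\lambda-c)}$, so the Proposition applies and gives $e^{f_{osc}/m}\le\phi(d/2)\le\phi(x_0)=D_m$.

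In all three cases, Corollary~\ref{CorTheorem2} then yields the Hitchin--Thorpe inequality. The main obstacle is case (3): I need the uniqueness and monotonicity argument for $\phi$, and I need to check that the hypothesis of Proposition~\ref{Prop:MixedEstimate} holds. The other two cases are direct inversions of monotone functions.
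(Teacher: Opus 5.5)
Your proposal is correct and follows the paper's proof essentially step for step. You reduce everything to $e^{f_{osc}/m}\le D_m$ via Corollary~\ref{CorTheorem2}, then invert Theorem~\ref{Theorem3}(1), Theorem~\ref{Theorem3}(2) and Proposition~\ref{Prop:MixedEstimate} using the monotonicity of $\arccos$, $\operatorname{arccosh}$ and $\cosh\cdot\sec$. You also check several details the paper leaves implicit: that $D_m>1$, that $x_0$ exists and is unique, and that $d\le 2x_0$ gives $d<\pi\sqrt{m/(\lambda-c)}$, so the mixed estimate applies. Your weighted-integral reason for $c<\lambda<C$ (with weight $e^{-f/m}$) is a valid substitute for the maximum-principle argument in Lemma~\ref{Lemma:Constants}.
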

	
	\begin{remark}
		In the formal limit $m\to\infty$, corresponding to the gradient Ricci soliton equation, the estimates obtained in Corollary~\ref{CorTheorem3} recover the diameter bound for compact Ricci solitons obtained in \cite[Theorem~1.1]{FernandezLopez2010}, while Corollary~\ref{CorTheorem2} reduces to the Hitchin--Thorpe criterion established in \cite[Corollary~1]{ChengRibeiroZhou2023}. Combining these two results, we obtain the following diameter condition: if a compact four-dimensional gradient Ricci soliton $(M^4,f,g)$ satisfies
		\begin{align}\label{diameter.log}
			d(M,g) \le \max\left\{
			\sqrt{\dfrac{2\log(5)}{\lambda - c}},
			\sqrt{\dfrac{2\log(5)}{C - \lambda}},
			2\sqrt{\dfrac{2\log(5)}{C - c}}
			\right\},
		\end{align}
		then $M^4$ must satisfy the Hitchin--Thorpe inequality. 
		
		Moreover, whenever $\log(5)>1$, the bound \eqref{diameter.log} improves upon the estimate established by Fernández-López and García-Río~\cite[Theorem 1.4]{FernandezLopez2010} by providing a larger upper bound for the diameter. In particular, this shows that the quasi-Einstein estimates obtained here are consistent with the corresponding bounds known in the Ricci soliton setting.
	\end{remark}
	Finally, as a consequence of Theorem~\ref{theorem2}, we prove an estimate for the volume of a compact quasi-Einstein manifold.	
	\begin{theorem} \label{Theorem.Est.Vol}
		Let $(M^4, g, f, m)$ be a compact $m$-quasi-Einstein manifold of dimension $4$ with $m > 1$. Then the following estimate for the volume of $M$ holds:
		\begin{align}\label{Eq.1Teo4}
			\frac{96\pi^2}{\lambda^2} \geq \frac{m^2}{(m - 1)(m + 3)} \left( 5 + \frac{8}{m} - \frac{12}{m^2} - e^{\frac{f_{osc}(m + 2)}{m}} \right) \vol(M).
		\end{align}
		Equality is satisfied if and only if $M$ is isometric to a sphere of radius $\sqrt{3/\lambda}$. Furthermore, the Yamabe invariant satisfies:
		\begin{align}\label{Eq.2Teo4}
			\mathcal{Y}(M, [g])^2 \geq \frac{4m^2\lambda^2}{(m - 1)(m + 3)} \left( 5 + \frac{8}{m} - \frac{12}{m^2} - e^{\frac{f_{osc}(m + 2)}{m}} \right) \vol(M),
		\end{align}
		and this equality holds if and only if $M$ is Einstein.
	\end{theorem}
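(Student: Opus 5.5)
The plan is to combine Theorem~\ref{theorem2} with the four-dimensional Chern--Gauss--Bonnet formula. This turns the estimate into a lower bound for a conformally invariant quantity, which is then controlled by the Yamabe invariant. I will prove \eqref{Eq.2Teo4} first and deduce \eqref{Eq.1Teo4} from it.

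\textbf{Step 1 (Gauss--Bonnet).} With the normalization of $|W|^2$ already implicit in Theorem~\ref{theorem2}, the Chern--Gauss--Bonnet formula reads
\begin{align*}
8\pi^2\chi(M)=\int_M |W|^2\,dV_g+\int_M\Big(\frac{R^2}{24}-\frac12|\rict|^2\Big)dV_g .
\end{align*}
Substituting this into \eqref{Desig.Teo.2} cancels the Weyl term and gives
\begin{align*}
\int_M\Big(\frac{R^2}{24}-\frac12|\rict|^2\Big)dV_g\ \ge\ \frac{m^2\lambda^2}{6(m-1)(m+3)}\Big(5+\frac8m-\frac{12}{m^2}-e^{\frac{f_{osc}(m+2)}{m}}\Big)\vol(M).
\end{align*}
Call the bracket $B$. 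If $B\le 0$, both \eqref{Eq.1Teo4} and \eqref{Eq.2Teo4} are trivial, since the right-hand sides are non-positive. So assume $B>0$.

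\textbf{Step 2 (Gursky's inequality).} The integral $\int_M\big(\frac{R^2}{24}-\frac12|\rict|^2\big)dV_g$ is conformally invariant in dimension four, because $\chi(M)$ and $\int_M|W|^2\,dV_g$ are. I then use Gursky's inequality: if $\mathcal Y(M,[g])>0$, then
\begin{align*}
\int_M\Big(\frac{R^2}{24}-\frac12|\rict|^2\Big)dV_g\le\frac{\mathcal Y(M,[g])^2}{24},
\end{align*}
with equality iff $g$ is Einstein. The idea behind it is to evaluate the invariant at a Yamabe metric, where $R$ is constant and $\int R^2 = \mathcal Y^2$, and then discard the $|\rict|^2$ term.

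To apply this I need $\mathcal Y(M,[g])>0$. This follows because a compact quasi-Einstein manifold with $\lambda>0$ has $R>0$ (a known fact, e.g.\ from Case's work), and positive scalar curvature in the conformal class forces the Yamabe invariant to be positive. Multiplying the inequality of Step~1 by $24$ then gives \eqref{Eq.2Teo4} directly, since $24\cdot\frac{m^2\lambda^2}{6(m-1)(m+3)}=\frac{4m^2\lambda^2}{(m-1)(m+3)}$.

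For the equality case of \eqref{Eq.2Teo4}: equality forces equality in Gursky's inequality, hence $g$ is Einstein. Conversely, in the Einstein case $f$ is constant (this is the trivial case), so equality holds in Theorem~\ref{theorem2}, and an Einstein metric is itself a Yamabe metric.

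\textbf{Step 3 (Aubin's bound).} Aubin's inequality gives $\mathcal Y(M,[g])\le\mathcal Y(\mathbb S^4)$, with equality only for the conformal class of the round sphere. For the round sphere, $\mathcal Y(\mathbb S^4)^2=R^2\vol=144\cdot\frac{8\pi^2}{3}=384\pi^2$. Combining this with \eqref{Eq.2Teo4} and dividing by $4\lambda^2$ yields \eqref{Eq.1Teo4}.

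For the equality case of \eqref{Eq.1Teo4}: equality forces $g$ to be Einstein (Step~2) and conformal to the round sphere. By Obata's theorem $g$ is then round. Since $f$ is constant, $\ric=\lambda g$, so the radius is $\sqrt{3/\lambda}$. The converse is a direct check.

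The main obstacle is justifying $\mathcal Y(M,[g])>0$, i.e.\ the positivity of the scalar curvature for non-trivial compact quasi-Einstein metrics. I would cite it from the literature rather than reprove it. A secondary point is to make sure the curvature normalizations in Theorem~\ref{theorem2} agree with those of the Gauss--Bonnet formula used in Step~1.
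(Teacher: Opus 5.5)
Your proposal is correct and is essentially the paper's argument. The paper obtains \eqref{Eq.1Teo4} by citing Gursky's bound $8\pi^2(\chi(M)-2)\le\int_M|W|^2\,dV_g$, and \eqref{Eq.2Teo4} from $\mathcal{Y}(M,[g])^2\ge\int_M(R^2-12|\rict|^2)\,dV_g$, each combined with Gauss--Bonnet and Theorem~\ref{theorem2}; your ordering (Yamabe bound first, then Aubin's $\mathcal{Y}(M,[g])\le\mathcal{Y}(\mathbb{S}^4)$) simply inlines the standard proof of that Gursky bound.

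One correction is needed in the equality cases. Equality in $\mathcal{Y}(M,[g])^2\ge\int_M(R^2-12|\rict|^2)\,dV_g$ only says that the Yamabe metric of $[g]$ is Einstein, since it holds for every metric conformal to an Einstein one; it does not say that $g$ itself is Einstein. Instead, use that equality in the chain also forces equality in Theorem~\ref{theorem2}, hence $f$ is constant and $\ric=\lambda g$, exactly as the paper does.
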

	
	
	\section{Preliminaries}

	In this section, we introduce basic notations as well as some  preliminary results that will be used throughout the paper. First, we recall some tools derived from the Riemann curvature tensor. The first is the Weyl tensor $ W_{ijkl}$ which splits as 
	\begin{align}\label{TensorRm}
		R_{ijkl} &= W_{ijkl} + \dfrac{1}{n - 2}(R_{ik}g_{jl} + R_{jl}g_{ik} - R_{il}g_{jk} - R_{jk}g_{il})\nonumber\\
		&\quad - \dfrac{R}{(n - 1)(n-2)}(g_{jl}g_{ik} - g_{il}g_{jk}),
	\end{align}
	where $g_{ij}, R_{ij}=g^{kl}R_{iklj}$  and $R_{ijkl}$	stand, respectively,  for the components of the metric, the Ricci tensor and the  curvature tensor. The scalar curvature is given by $R= g^{ij}R_{ij}.$
	
	In 4-dimensional Riemannian geometry, the analysis of curvature often focuses on the algebraic structure of the Weyl tensor, $W$. To simplify this analysis, one exploits a remarkable property of the space of 2-forms ($\Lambda^2$) in this dimension. This space naturally decomposes into two three-dimensional subspaces, known as the space of self-dual ($\Lambda^+$) and anti-self-dual ($\Lambda^-$) 2-forms. This division is canonical, conformally invariant, and expressed as a direct sum:
	\begin{equation}
		\Lambda^2 = \Lambda^+ \oplus \Lambda^-.
	\end{equation}
	The Weyl tensor can be viewed as an operator acting on the space $\Lambda^2$. The key advantage of this decomposition is that the Weyl tensor preserves it; that is, it never maps an element from $\Lambda^+$ to $\Lambda^-$ or vice versa. This allows its action to be studied independently on each subspace, splitting the tensor into two simpler parts:
	\begin{equation}
		W = W^+ \oplus W^-,
	\end{equation}
	where $W^\pm : \Lambda^\pm M \longrightarrow \Lambda^\pm M$ are endomorphisms known, respectively, as the \textit{self-dual part} and the \textit{anti-self-dual part} of $W$.
	
	From this decomposition, the curvature operator is given by the following matrix
	\begin{equation}\label{matriz Rm}
		\mathcal{R} = \left(\begin{array}{@{}cc@{}}
			\begin{matrix}
				W^+ + \frac{R}{12}g\\
			\end{matrix}
			& \mathring{\text{Ric}} \\
			\mathring{\text{Ric}} &
			\begin{matrix}
				W^- + \frac{R}{12}g\\
			\end{matrix}
		\end{array}\right),
	\end{equation}
	where $R$ is the scalar curvature and $\mathring{\text{Ric}}$ is the traceless part of the Ricci tensor.
	At any given point $p \in M^4$, by choosing an oriented orthonormal basis $\{e_i\}_{i=1}^4$, we can construct an explicit basis for the $\Lambda^\pm$ subspaces:
	\begin{equation}
		\{ e^1 \wedge e^2 \pm e^3 \wedge e^4, e^1 \wedge e^3 \pm e^4 \wedge e^2, e^1 \wedge e^4 \pm e^2 \wedge e^3 \},
	\end{equation}
	where each of these bivectors has  norm  $\sqrt{2}$.
	
	Finally, since $W^+$ and $W^-$ are operators on 3-dimensional spaces, they each have three eigenvalues, denoted by $w_i^\pm$ (for $i=1,2,3$). The traceless property of the Weyl tensor implies that the sum of these eigenvalues must also be zero for each part, leading to the following conditions:
	\begin{equation}
		w_1^\pm \le w_2^\pm \le w_3^\pm \quad \text{and} \quad w_1^\pm + w_2^\pm + w_3^\pm = 0.
	\end{equation}
	On the other hand, for a compact Riemannian manifold $(M^{4},g)$, there are two classical formulas that relate the geometry and the topology of the manifold. The first one is the Gauss--Bonnet--Chern formula given by
	\begin{align}\label{Gauss--Bonnet--Chernformula}
		8\pi^2 \chi(M) = \int_M \left(|W^+|^2 + |W^-|^2 + \dfrac{R^2}{24} - \dfrac{1}{2}|\mathring{\text{Ric}}|^2\right)dV_g,
	\end{align}
	whereas the second one is the Hirzebruch formula
	\begin{align}\label{Hirzebruchformula}
		12 \pi^2 \tau(M) = \int_M (|W^+|^2 - |W^-|^2) dV_g.
	\end{align}		
	For the foundational theory regarding the Gauss--Bonnet--Chern formula and the Hirzebruch signature theorem, we refer the reader to \cite[Chapter 13]{Besse}. As a consequence of these two distinguished formulas  we arrive at the following identity
	\begin{align}\label{gauss-hei}
		2\chi(M) \pm 3\tau(M) = \dfrac{1}{4 \pi^2}\int_M \left(2| W^{\pm}|^2 + \dfrac{R^2}{24} - \dfrac{1}{2}|\mathring{\text{Ric}}|^2\right)dV_g
	\end{align}	
	It is easy to see that the Hitchin--Thorpe inequality~\eqref{eq:hitchin_thorpe} is satisfied whenever the traceless part of the Ricci tensor vanishes.
	
	In the following, we recall the useful equations for the curvatures of an $m$-quasi-Einstein manifold. For their proofs and a more detailed discussion, we refer the reader to \cite{Case2011} and the references therein.
	\begin{lemma}\label{Lemma1}
		Let $(M^4,g,f,m)$ be a quasi-Einstein manifold of dimension $4$ with $m > 1$, then the following relations are satisfied:
		\begin{itemize}
			\item [1)] $R + \Delta f - \dfrac{1}{m}|\nabla f|^2 = 4 \lambda $
			\item [2)] $\dfrac{1}{2} \nabla R = \dfrac{m - 1}{m} \ric (\nabla f) + \dfrac{1}{m}(R - 3\lambda)\nabla f$
			\item [3)] $\Delta R = \dfrac{m + 2}{m}\lrsf - \dfrac{2(m-1)}{m}|\ric|^2 - \dfrac{2}{m}R^2 + \dfrac{2(m+6)}{m}\lambda R - \dfrac{24}{m}\lambda^2$ 
		\end{itemize}
	\end{lemma}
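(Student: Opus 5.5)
The plan is to derive all three identities directly from the quasi-Einstein equation $\ric + \nabla^2 f - \frac1m df\otimes df = \lambda g$ in dimension $n=4$. We use only three tools: the contracted second Bianchi identity $\operatorname{div}\ric = \frac12\nabla R$, the Ricci commutation formula $\operatorname{div}(\nabla^2 f) = \nabla\Delta f + \ric(\nabla f)$, and repeated substitution of the equation itself to eliminate the Hessian. These are the standard computations of \cite{Case2011}.

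\emph{Item 1).} Take the trace of \eqref{eq:quasi_einstein} with $n=4$. This gives $R + \Delta f - \frac1m|\nabla f|^2 = 4\lambda$ immediately.

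\emph{Item 2).} Take the divergence of \eqref{eq:quasi_einstein}. Using $\operatorname{div}(df\otimes df) = \Delta f\,\nabla f + \nabla^2 f(\nabla f)$, we obtain
\begin{align*}
\tfrac12\nabla R + \nabla\Delta f + \ric(\nabla f) - \tfrac1m\bigl(\Delta f\,\nabla f + \nabla^2 f(\nabla f)\bigr) = 0 .
\end{align*}
Next, differentiate item 1) to get $\nabla\Delta f = -\nabla R + \frac2m\nabla^2 f(\nabla f)$. Then make two substitutions: replace $\nabla^2 f(\nabla f)$ by $\lambda\nabla f - \ric(\nabla f) + \frac1m|\nabla f|^2\nabla f$, and replace $\Delta f$ by $4\lambda - R + \frac1m|\nabla f|^2$. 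The cubic terms $\frac{1}{m^2}|\nabla f|^2\nabla f$ cancel, and what remains is exactly $\frac12\nabla R = \frac{m-1}{m}\ric(\nabla f) + \frac1m(R-3\lambda)\nabla f$.

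\emph{Item 3).} Take the divergence of item 2). The three ingredients are as follows:
\begin{align*}
\operatorname{div}(\ric(\nabla f)) &= \tfrac12\lrsf + \langle \ric,\nabla^2 f\rangle,\\
\langle \ric,\nabla^2 f\rangle &= \lambda R - |\ric|^2 + \tfrac1m \ric(\nabla f,\nabla f),\\
\operatorname{div}\bigl((R-3\lambda)\nabla f\bigr) &= \lrsf + (R-3\lambda)\Delta f .
\end{align*}
In the last expression, replace $\Delta f$ using item 1). To eliminate the remaining term $\ric(\nabla f,\nabla f)$, pair item 2) with $\nabla f$:
\begin{align*}
\tfrac{m-1}{m}\ric(\nabla f,\nabla f) = \tfrac12\lrsf - \tfrac1m(R-3\lambda)|\nabla f|^2 .
\end{align*}
After these substitutions, the $|\nabla f|^2$ terms should cancel. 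Collecting the coefficients of $\lrsf$, $|\ric|^2$, $R^2$, $\lambda R$ and $\lambda^2$ should then yield
\begin{align*}
\Delta R = \tfrac{m+2}{m}\lrsf - \tfrac{2(m-1)}{m}|\ric|^2 - \tfrac2m R^2 + \tfrac{2(m+6)}{m}\lambda R - \tfrac{24}{m}\lambda^2 .
\end{align*}

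The main obstacle is purely bookkeeping in item 3). One must verify that all $|\nabla f|^2$ contributions, which arise both from $\Delta f$ and from $\ric(\nabla f,\nabla f)$, cancel exactly, and that the factors of $\frac1m$ and $\frac{m-1}{m}$ combine to give the stated coefficients. I would do this check carefully, term by term.
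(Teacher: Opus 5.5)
Your derivation is correct and is the standard divergence computation of \cite{Case2011}; the paper does not prove this lemma itself but cites that reference. The bookkeeping you flag does work out: the two terms $\pm\frac{1}{m^2}(R-3\lambda)|\nabla f|^2$ cancel, and in $\frac12\Delta R$ the coefficients of $\lrsf$, $|\ric|^2$, $R^2$, $\lambda R$, $\lambda^2$ are $\frac{m+2}{2m}$, $-\frac{m-1}{m}$, $-\frac1m$, $\frac{m+6}{m}$, $-\frac{12}{m}$, which give item 3) after doubling.
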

	
	In what follows, we establish a key lemma that will be essential for the proofs of our main results, specifically Proposition~\ref{proposition1.1} and Theorem~\ref{theorem2}.
	
	\begin{lemma}\label{Lemma2}
		Let $(M^4,g,f,m)$ be a quasi-Einstein manifold with $m>1$, then
		\begin{align}\label{Euler.Scal}
			8\pi^2 \chi(M) &= \int_M |W|^2 dV_g + \dfrac{(m- 2)\lambda}{2m(m - 1)} \int_M|\nabla f|^2 dV_g + \dfrac{m + 2}{4m(m - 1)}\int_M R|\nabla f|^2 dV_g \\
			&\quad - \dfrac{m + 2}{12(m - 1)}\int_M R^2 dV_g + 2\dfrac{m+1}{m-1}\lambda^2 \vol(M);\nonumber
		\end{align}
		\begin{align}\label{Euler.Grad}
			8\pi^2 \chi(M) &= \int_M |W|^2 dV_g + \dfrac{(m- 10)\lambda}{6m(m - 1)} \int_M|\nabla f|^2 dV_g + \dfrac{m + 2}{6m(m - 1)}\int_M R|\nabla f|^2 dV_g\\
			&\quad - \dfrac{m + 2}{12(m - 1)}\int_M \lrsf dV_g + \dfrac{2}{3}\lambda^2 \vol(M).\nonumber
		\end{align}
	\end{lemma}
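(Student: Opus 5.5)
The plan is to start from the Gauss--Bonnet--Chern formula \eqref{Gauss--Bonnet--Chernformula}, rewrite its integrand purely in terms of $|W|^2$, $R^2$ and $|\ric|^2$, and then eliminate $\int_M|\ric|^2\,dV_g$ by integrating the identity for $\Delta R$ in Lemma~\ref{Lemma1}. Using $|W|^2=|W^+|^2+|W^-|^2$ and $|\rict|^2=|\ric|^2-\frac{R^2}{4}$ in dimension four, I first get
\begin{align*}
8\pi^2\chi(M)=\int_M |W|^2\,dV_g+\frac16\int_M R^2\,dV_g-\frac12\int_M|\ric|^2\,dV_g .
\end{align*}

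Next I integrate item 3) of Lemma~\ref{Lemma1} over the closed manifold, so that $\int_M\Delta R\,dV_g=0$. This lets me solve for the Ricci term:
\begin{align*}
\int_M|\ric|^2\,dV_g=\frac{1}{2(m-1)}\Big[(m+2)\!\int_M\lrsf\,dV_g-2\!\int_M R^2\,dV_g+2(m+6)\lambda\!\int_M R\,dV_g-24\lambda^2\vol(M)\Big].
\end{align*}
The scalar curvature term is handled by integrating item 1), since $\int_M\Delta f\,dV_g=0$:
\begin{align*}
\int_M R\,dV_g=4\lambda\vol(M)+\frac1m\int_M|\nabla f|^2\,dV_g .
\end{align*}
The two identities then differ only in which of $\int_M R^2$ and $\int_M\lrsf$ is kept. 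Both come from the same integration by parts, using item 1) again to substitute $\Delta f=4\lambda-R+\frac1m|\nabla f|^2$:
\begin{align*}
\int_M\lrsf\,dV_g=-\int_M R\Delta f\,dV_g=\int_M R^2\,dV_g-4\lambda\int_M R\,dV_g-\frac1m\int_M R|\nabla f|^2\,dV_g .
\end{align*}

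For \eqref{Euler.Scal} I use this relation to eliminate $\int_M\lrsf$. As a quick check, the coefficient of $\int_M R^2$ becomes
\[
\frac16-\frac{(m+2)-2}{4(m-1)}=-\frac{m+2}{12(m-1)},
\]
which matches the statement. The terms $\int_M R|\nabla f|^2$ and $\int_M|\nabla f|^2$ then acquire the stated coefficients after substituting for $\int_M R$. For \eqref{Euler.Grad} I instead use the same relation to eliminate $\int_M R^2$ in favour of $\int_M\lrsf$, $\int_M R|\nabla f|^2$ and $\int_M R$, and then substitute for $\int_M R$ once more.

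There is no conceptual obstacle here. The main risk is bookkeeping: the coefficients of $\lambda\int_M|\nabla f|^2$ (namely $\frac{m-2}{2m(m-1)}$ and $\frac{m-10}{6m(m-1)}$) and of $\lambda^2\vol(M)$ collect contributions from several substitutions. I would verify these carefully, for instance by checking the Einstein case $f$ constant, where both identities must reduce to $8\pi^2\chi=\int_M|W|^2+\frac23\lambda^2\vol(M)$ with $R=4\lambda$.
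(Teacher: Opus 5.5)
Your proposal is correct and follows essentially the same route as the paper. Both integrate items 1) and 3) of Lemma~\ref{Lemma1}, use $\int_M\lrsf\,dV_g=-\int_M R\Delta f\,dV_g$ to trade $\int_M R^2$ against $\int_M\lrsf$, and substitute the resulting expression for $\int_M|\ric|^2$ into the Gauss--Bonnet--Chern formula; carrying out your outlined bookkeeping does yield exactly the stated coefficients in both \eqref{Euler.Scal} and \eqref{Euler.Grad}.
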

	\begin{proof}
		From the first item of Lemma~\ref{Lemma1}, we have
		\begin{align}\label{IntR}
			\int_M R dV_g =  4\lambda \vol(M) + \dfrac{1}{m}\int_M |\nabla f|^2 dV_g.
		\end{align}
		Now, note that
		\begin{align}\label{IntR^2}
			\int_M R^2 dV_g &= \int_M R(4 \lambda + \dfrac{1}{m}|\nabla f|^2 -\Delta f)dV_g\nonumber\\
			&=  4 \lambda \int_M R dV_g + \dfrac{1}{m} \int_M R|\nabla f|^2 dV_g - \int_M R \Delta f dV_g\nonumber\\
			&= 16\lambda^2\vol(M) + \dfrac{4\lambda}{m}\int_M |\nabla f|^2 dV_g + \dfrac{1}{m}\int_M R|\nabla f|^2 dV_g + \int_M \lrsf dV_g.
		\end{align}
		Integrating the third item of Lemma~\ref{Lemma1}, it follows that			
		\begin{align*}
			\int_M |\ric|^2 &= \dfrac{m + 2}{2(m - 1)}\int_M \lrsf dV_g - \dfrac{1}{m - 1}\int_M R^2 dV_g + \dfrac{(m + 6)\lambda}{m - 1} \int_M R dV_g\\
			& \quad - \dfrac{12\lambda^2}{m - 1} \vol(M)\\
			&= \dfrac{m + 2}{2(m - 1)}\int_M \lrsf dV_g - \dfrac{1}{m - 1}\int_M R^2 dV_g + \dfrac{4(m + 3)}{m - 1}\lambda ^2 \vol(M)\\
			& \quad+ \dfrac{(m + 6)\lambda}{m(m - 1)} \int_M |\nabla f|^2 dV_g\\
			&=\dfrac{m}{2(m - 1)}\int_M \lrsf dV_g + \dfrac{(m + 2)\lambda}{m(m - 1)} \int_M |\nabla f|^2 dV_g + 4\lambda^2 \vol(M)\\
			&\quad - \dfrac{1}{m(m - 1)}\int_M R|\nabla f|^2 dV_g,
		\end{align*}
		where we used \eqref{IntR} and \eqref{IntR^2}. 
		Now, substituting \eqref{IntR^2} into the relation above, we obtain
		\begin{align}\label{RicScal}
			\int_M |\ric|^2 &= \dfrac{m}{2(m-1)}\int_M R^2 dV_g - \dfrac{(m - 2)\lambda}{m(m - 1)} \int_M |\nabla f|^2 dV_g\\
			&\quad - \dfrac{m+2}{2m(m - 1)}\int_M R|\nabla f|^2 dV_g - 4\dfrac{m+1}{m-1}\lambda^2 \vol(M)\nonumber
		\end{align}	
		By substituting \eqref{RicScal} into \eqref{Gauss--Bonnet--Chernformula}, it follows that
		\begin{align*}
			8\pi^2 \chi(M) &= \int_M |W|^2 dV_g + \dfrac{(m- 2)\lambda}{2m(m - 1)} \int_M|\nabla f|^2 dV_g + \dfrac{m + 2}{4m(m - 1)}\int_M R|\nabla f|^2 dV_g \nonumber\\
			&\quad - \dfrac{m + 2}{12(m - 1)}\int_M R^2 dV_g + 2\dfrac{m+1}{m-1}\lambda^2 \vol(M).
		\end{align*}
		This proves \eqref{Euler.Scal}. Substituting \eqref{IntR^2} into the formula above, we obtain
		\begin{align*}
			8\pi^2 \chi(M) &= \int_M |W|^2 dV_g + \dfrac{(m- 10)\lambda}{6m(m - 1)} \int_M|\nabla f|^2 dV_g + \dfrac{m + 2}{6m(m - 1)}\int_M R|\nabla f|^2 dV_g \nonumber\\
			&\quad - \dfrac{m + 2}{12(m - 1)}\int_M \lrsf dV_g + \dfrac{2}{3}\lambda^2 \vol(M),
		\end{align*}
		which proves \eqref{Euler.Grad} and thereby completes the proof of the lemma.
	\end{proof}
	
	\section{Proof of Main Results}
	
	\begin{proof}[Proof of Proposition~\ref{proposition1.1}]
		Using \eqref{Euler.Scal} and the Hirzebruch formula~\eqref{Hirzebruchformula},	we obtain
		\begin{align*}
			4 \pi^2\left(2  \chi(M) \pm 3\tau(M)\right) &=  \int_M 2|W^{\pm}|^2 dV_g - \dfrac{m + 2}{12(m-1)} \int_M R^2 dV_g + \dfrac{(m - 2)\lambda}{2m(m-1)}\int_M |\nabla f|^2 dV_g\\
			&\quad + \dfrac{m + 2}{4m(m-1)}\int_M R|\nabla f|^2 dV_g + \dfrac{2(m+1)}{m - 1}\lambda^2 \vol(M).
		\end{align*}
		Using the lower bound $R \geq \frac{12}{m+3}\lambda$ established in \cite{Case2011}, we can estimate the gradient terms in the expression above. A direct computation yields:
		\begin{align*}
			\dfrac{(m - 2)\lambda}{2m(m-1)} + \dfrac{m + 2}{4m(m-1)}R &\geq \dfrac{\lambda}{4m(m-1)}\left(2(m-2) + \dfrac{12(m+2)}{m+3}\right)\\
			&= \dfrac{2m^2 + 14m + 12}{4m(m-1)(m+3)}\lambda.
		\end{align*}
		Since $m > 1$, this coefficient is strictly positive. Therefore, the integral involving $|\nabla f|^2$ is non-negative and can be bounded from below by zero. Thus, if
		\begin{align*}
			\dfrac{m + 2}{12(m-1)} \int_M R^2 dV_g \leq \dfrac{2(m+1)}{m - 1}\lambda^2 \vol(M),
		\end{align*}
		then $2  \chi(M) \pm 3\tau(M) \geq 0$, in other words, if
		\begin{align*}
			\int_M R^2 dV_g \leq \dfrac{24(m+1)}{m + 2}\lambda^2 \vol(M),
		\end{align*}
		the Hitchin--Thorpe inequality is satisfied. This proves \eqref{Theo1Form1}.
	\end{proof}

	For the proof of Theorem~\ref{theorem2}, we will use the following result due to Colding and Minicozzi, see Lemma 1.3 in \cite{colding2021}.
	
	\begin{lemma}\label{TechLemma1}
		Suppose that $\alpha$ is a proper $C^n$ function and $\mathcal{H}^{n}\{| \nabla \alpha|  = 0\} = 0$ in $\{\alpha \geq r_0\}$ for some fixed $r_0$. If $h$ is a bounded function and $Q(r) = \int_{r_0 < \alpha < r} h$, then $Q$ is absolutely continuous and $Q'(r) = \int_{\alpha = r} \frac{h}{| \nabla \alpha| }$ almost everywhere.
	\end{lemma}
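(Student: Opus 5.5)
The plan is to deduce the lemma from the coarea formula, using the hypothesis on the critical set of $\alpha$ and Sard's theorem to control the degenerate points. The order is: reduce to nonnegative $h$, apply coarea to the regular part, dispose of the critical set, and finish with the fundamental theorem of calculus for Lebesgue integrals.

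\textbf{Setup.} Fix $r>r_0$. Since $\alpha$ is proper, the region $\Omega_r=\{r_0<\alpha<r\}$ has compact closure, so it has finite volume. Since $h$ is bounded, $h\in L^1(\Omega_r)$ and $Q(r)$ is well defined. Write $h=h^+-h^-$; every identity below is linear in $h$, so it suffices to treat $h\ge 0$ measurable and bounded.

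\textbf{Coarea on the regular part.} Let $Z=\{|\nabla\alpha|=0\}$. By hypothesis $\mathcal H^n(Z\cap\{\alpha\ge r_0\})=0$, so
\[
Q(r)=\int_{\Omega_r\setminus Z}h .
\]
On $\Omega_r\setminus Z$ put $u=h/|\nabla\alpha|\ge 0$. The coarea formula for the Lipschitz (indeed $C^1$) function $\alpha$, applied to the nonnegative measurable function $u\,\chi_{\Omega_r\setminus Z}$, gives
\[
\int_{\Omega_r\setminus Z}h=\int_{\Omega_r\setminus Z}u\,|\nabla\alpha|=\int_{r_0}^{r}\Big(\int_{\{\alpha=s\}\setminus Z}\frac{h}{|\nabla\alpha|}\,d\mathcal H^{n-1}\Big)ds .
\]
Tonelli's theorem allows this even before integrability is known. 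Since the left side is finite, the inner function
\[
\varphi(s)=\int_{\{\alpha=s\}\setminus Z}\frac{h}{|\nabla\alpha|}\,d\mathcal H^{n-1}
\]
is in $L^1_{\rm loc}((r_0,\infty))$.

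\textbf{Removing the critical set from the level sets.} This is the delicate step. The goal is to show that for almost every $s$ the set $\{\alpha=s\}\cap Z$ is empty, so that $\varphi(s)=\int_{\{\alpha=s\}}h/|\nabla\alpha|$. Here the $C^n$ regularity is used. Sard's theorem for $\alpha\colon M^n\to\mathbb R$ requires $C^{k}$ with $k\ge\max(n-1+1,1)=n$. It shows that the critical values $\alpha(Z)$ form a Lebesgue null set. For $s\notin\alpha(Z)$ the level set $\{\alpha=s\}$ contains no critical points, so the integrand $h/|\nabla\alpha|$ is finite on it.

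If one wanted to avoid Sard, the alternative is to apply coarea to $\chi_Z$. This gives $\int_{\mathbb R}\mathcal H^{n-1}(Z\cap\{\alpha=s\})\,ds=\int_Z|\nabla\alpha|=0$, hence $\mathcal H^{n-1}(Z\cap\{\alpha=s\})=0$ for a.e. $s$. However, that alone does not make the integrand well defined on $Z$, so Sard is the cleaner route.

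\textbf{Conclusion.} We now have
\[
Q(r)=\int_{r_0}^{r}\varphi(s)\,ds \quad\text{with } \varphi\in L^1_{\rm loc}.
\]
Hence $Q$ is absolutely continuous on compact subintervals. By the Lebesgue differentiation theorem, $Q'(r)=\varphi(r)=\int_{\{\alpha=r\}}h/|\nabla\alpha|$ for almost every $r$. For general bounded $h$, subtract the identities for $h^\pm$; both sides are finite, so this is legitimate.
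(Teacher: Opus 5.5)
Your argument is correct. The paper gives no proof of this lemma; it simply quotes Lemma 1.3 of Colding--Minicozzi. Your route is the standard proof of that cited result: the coarea formula on $\{r_0<\alpha<r\}\setminus Z$ (using $\mathcal H^n(Z\cap\{\alpha\ge r_0\})=0$), then Sard to clear the critical values, then Lebesgue differentiation of $Q(r)=\int_{r_0}^r\varphi$.

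One small point: you undersell the alternative you set aside. Since $\mathcal H^{n-1}(Z\cap\{\alpha=s\})=0$ for a.e.\ $s$, the values of $h/|\nabla\alpha|$ on $Z$ already do not affect the $\mathcal H^{n-1}$-integral. So $C^1$ suffices for the a.e.\ identity. Sard is needed only if you want a.e.\ level set to be a genuine regular hypersurface, as in the paper's later use of the divergence theorem on $\partial D(t)$.
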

	
	\begin{proof}[Proof of Theorem~\ref{theorem2}]
		Consider the sub-level sets $D(s) = \{x : f(x) < s\}$ and let $a=f_{\min}$, $b=f_{\max}$. According to \cite{He2012}, $g$ and $f$ are real analytic. In particular, $\mathcal{H}^4\{|\nabla f|  = 0\} = 0$. Therefore, applying Lemma~\ref{TechLemma1} and the divergence theorem, we obtain for any $s \in [a,b]$:
		\begin{align*}
			\int_{D(s)} \langle \nabla R, \nabla f \rangle dV_g &= \int_{a}^{s} \int_{\partial D(t)} \dfrac{\langle \nabla R, \nabla f \rangle}{|\nabla f|} d \sigma dt = \int_{a}^{s} \left( \int_{D(t)} \Delta R dV_g\right)dt\\
			&= \int_{a}^{s} \left( \int_{D(t)} \left[ \dfrac{m + 2}{m}\langle \nabla R, \nabla f \rangle - \dfrac{2(m-1)}{m}|\operatorname{Ric}|^2 - \dfrac{2}{m}R^2 \right.\right.\\
			&\quad + \left.\left.\dfrac{2(m+6)}{m}\lambda R - \dfrac{24}{m}\lambda^2 \right] dV_g\right)dt.
		\end{align*}
		Define the following functions:
		\begin{align*}
			\phi(s) &= \int_{a}^{s}\left(\int_{D(t)}\langle \nabla R, \nabla f \rangle dV_g\right)dt, \\
			\varphi(s) &= \int_{a}^{s}\left(\int_{D(t)} \left[- \dfrac{2(m-1)}{m}|\operatorname{Ric}|^2 - \dfrac{2}{m}R^2 + \dfrac{2(m+6)}{m}\lambda R - \dfrac{24}{m}\lambda^2\right] dV_g\right)dt.
		\end{align*}
		It follows from the fundamental theorem of calculus and the integral relations above that $\phi''(s) = \dfrac{m+2}{m} \phi'(s) + \varphi'(s)$. Given that $\phi'(a) = 0$, setting $K = \dfrac{m + 2}{m}$ for convenience, we solve the linear differential equation to obtain:
		\begin{align*}
			\phi'(s) = e^{K s} \int_{a}^{s} \varphi'(t)e^{-Kt}dt.
		\end{align*}
		Evaluating at $s=b$, we bound $\phi'(b)$ by completing the square for $R$:
		\begin{align}\label{Estimative.Oscf}
			\phi'(b) &= e^{K b} \int_{a}^{b} \left(\int_{D(t)} \left[- \dfrac{2(m-1)}{m}|\operatorname{Ric}|^2 - \dfrac{2}{m}R^2 + \dfrac{2(m+6)}{m}\lambda R - \dfrac{24}{m}\lambda^2\right] dV_g\right)e^{-Kt}dt\nonumber\\
			&\leq e^{K b} \int_{a}^{b} \left(\int_{D(t)} \left[- \dfrac{(m-1)}{2m}R^2 - \dfrac{2}{m}R^2 + \dfrac{2(m+6)}{m}\lambda R - \dfrac{24}{m}\lambda^2\right] dV_g\right)e^{-Kt}dt\nonumber\\
			&= e^{K b} \int_{a}^{b} \left(\int_{D(t)} \left[- \dfrac{(m+3)}{2m}R^2 + \dfrac{2(m+6)}{m}\lambda R - \dfrac{24}{m}\lambda^2\right] dV_g\right)e^{-Kt}dt\nonumber\\
			&= e^{K b} \int_{a}^{b} \left(\int_{D(t)} \left[ \dfrac{2m}{m+3}\lambda^2 - \dfrac{m + 3}{2m}\left( R - \dfrac{2(m + 6)}{m + 3}\lambda\right)^2 \right] dV_g\right)e^{-Kt}dt\nonumber\\
			&\leq e^{K b} \int_{a}^{b} \left(\int_{D(t)} \dfrac{2m}{m+3}\lambda^2 dV_g\right)e^{-Kt}dt\nonumber\\
			&\leq e^{K b}\dfrac{2m}{m+3}\lambda^2 \vol(M) \int_{a}^{b}e^{-Kt}dt = e^{K b}\dfrac{2m}{m+3}\lambda^2 \vol(M) \left( \dfrac{e^{-Ka} - e^{-Kb}}{K} \right)\nonumber\\
			&= \dfrac{2m^2}{(m+3)(m+2)}\lambda^2 \vol(M) \left( e^{(b - a)K} - 1\right).
		\end{align}
		where we used $|\ric|^2 = |\rict|^2 + \frac{R^2}{4} \ge \frac{R^2}{4}$. Noticing that $\phi'(b) = \int_M \langle \nabla R, \nabla f \rangle dV_g$, we have:
		\begin{align*}
			\int_M \langle \nabla R, \nabla f \rangle dV_g \leq \dfrac{2m^2}{(m+3)(m+2)}\lambda^2 \vol(M) \left( e^{(b - a)K} - 1\right).
		\end{align*}	
		Using Lemma~\ref{Lemma2} along with the previous inequality:
		\begin{align}\label{Euler.Theo2}
			8 \pi^2\chi(M) &= \int_M |W|^2 dV_g + \dfrac{(m- 10)\lambda}{6m(m - 1)} \int_M|\nabla f|^2 dV_g + \dfrac{m + 2}{6m(m - 1)}\int_M R|\nabla f|^2 dV_g\nonumber\\
			&\quad - \dfrac{m + 2}{12(m - 1)}\int_M \langle \nabla R, \nabla f \rangle dV_g + \dfrac{2}{3}\lambda^2 \vol(M)\nonumber\\
			&\geq \int_M |W|^2 dV_g + \dfrac{(m- 10)\lambda}{6m(m - 1)} \int_M|\nabla f|^2 dV_g + \dfrac{m + 2}{6m(m - 1)}\int_M R|\nabla f|^2 dV_g \nonumber\\
			&\quad + \dfrac{2}{3}\lambda^2 \vol(M) + \dfrac{m^2\lambda^2}{6(m - 1)(m + 3)} \vol(M) \left(1 - e^{\frac{(b - a)(m + 2)}{m}}\right)\nonumber\\
			&= \int_M |W|^2 dV_g + \dfrac{(m- 10)\lambda}{6m(m - 1)} \int_M|\nabla f|^2 dV_g + \dfrac{m + 2}{6m(m - 1)}\int_M R|\nabla f|^2 dV_g \nonumber\\
			&\quad + \dfrac{m^2\lambda^2}{6(m - 1)(m + 3)} \vol(M)\left(\dfrac{5m^2 + 8m - 12}{m^2} - e^{\frac{(b - a)(m + 2)}{m}}\right). 		
		\end{align}
		Similarly to the argument used in Proposition~\ref{proposition1.1}, we apply the bound $R \geq \frac{12}{m+3}\lambda$ to estimate the gradient terms in \eqref{Euler.Theo2}. This yields:
		\begin{align}\label{Inequality.Theo.2}
			8 \pi^2\chi(M) &\geq \int_M |W|^2 dV_g + \left(\dfrac{m- 10}{6m(m - 1)} + \dfrac{12(m + 2)}{6m(m+3)(m-1)}\right)\lambda \int_M|\nabla f|^2 dV_g \nonumber\\
			&\quad + \dfrac{m^2\lambda^2}{6(m - 1)(m + 3)} \vol(M)\left(\dfrac{5m^2 + 8m - 12}{m^2} - e^{\frac{(b - a)(m + 2)}{m}}\right)\nonumber\\
			&=\int_M |W|^2 dV_g + \dfrac{m + 6}{6m(m+3)}\lambda \int_M|\nabla f|^2 dV_g \nonumber\\
			&\quad + \dfrac{m^2\lambda^2}{6(m - 1)(m + 3)} \vol(M)\left(\dfrac{5m^2 + 8m - 12}{m^2} - e^{\frac{(b - a)(m + 2)}{m}}\right).
		\end{align}
		Since $\dfrac{m+6}{6m(m+3)}\lambda$ is a positive constant for $m>1$, the second term in \eqref{Inequality.Theo.2} is strictly non-negative. Dropping this term, we immediately obtain the desired inequality:
		\begin{align}\label{Inequality.Theo.3}
			8 \pi^2\chi(M) &\geq \int_M |W|^2 dV_g + \dfrac{m^2\lambda^2}{6(m - 1)(m + 3)} \vol(M)\left(5 + \dfrac{8}{m} - \dfrac{12}{m^2} - e^{\frac{(b - a)(m + 2)}{m}}\right).
		\end{align}
		Furthermore, equality in \eqref{Inequality.Theo.3} holds if and only if $f$ is constant. If $f$ is constant, equality follows trivially from \eqref{Euler.Grad}. Conversely, if $f$ is not constant, the strict sub-level set $D(t)$ satisfies $\vol(D(t)) < \vol(M)$ for all $t \in (a, b)$. Since the integration interval has positive length, bounding the volume of $D(t)$ by $\vol(M)$ in \eqref{Estimative.Oscf} yields a strict inequality, forcing \eqref{Inequality.Theo.3} to be strict.
	\end{proof}
	
	\begin{proof}[Proof of Corollary~\ref{CorTheorem2}]
		Using the formula
		\begin{align*}
			12\pi^2\tau(M) = \int_M (|W^+|^2 - |W^-|^2) dV_g,
		\end{align*}
		together with \eqref{Desig.Teo.2} and since $|W^{\pm}|^2$ is non-negative, we deduce
		\begin{align*}
			4\pi^2(2\chi(M) \pm 3\tau(M)) \geq \dfrac{m^2\lambda^2}{6(m - 1)(m + 3)}\vol(M) \left(5 + \dfrac{8}{m} - \dfrac{12}{m^2} - e^{\frac{f_{osc}(m + 2)}{m}}\right).
		\end{align*}
		Thus, the Hitchin--Thorpe inequality is satisfied if $0 \leq \left(5 + \frac{8}{m} - \frac{12}{m^2}\right) - e^{\frac{f_{osc}(m + 2)}{m}},$ i.e
		\begin{align*}
			f_{\max} - f_{\min} &\leq \dfrac{m}{m+2}\log\left(5 + \dfrac{8}{m} - \dfrac{12}{m^2}\right).
		\end{align*}
	\end{proof}	
	
	Before proceeding with the proof of Theorem~\ref{Theorem3}, we must ensure that the constants appearing in the denominators of our estimates are strictly positive. 
	
	\begin{lemma} \label{Lemma:Constants}
		Let $(M^n,g,f,m)$ be a compact non-trivial $m$-quasi-Einstein manifold. If $c$ and $C$ denote the global minimum and maximum of the Ricci curvature, respectively, then $c < \lambda < C$. In particular, the constants $K = \frac{\lambda - c}{m}$ and $H = \frac{C - \lambda}{m}$ are strictly positive.
	\end{lemma}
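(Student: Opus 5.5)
Throughout I write $u := e^{-f/m}$. This substitution linearizes the quasi-Einstein equation. Since
\[
\nabla^2 u = -\frac{u}{m}\left(\nabla^2 f - \frac{1}{m}\, df\otimes df\right),
\]
equation \eqref{eq:quasi_einstein} becomes
\begin{align*}
\nabla^2 u = \frac{u}{m}\left(\operatorname{Ric} - \lambda g\right).
\end{align*}
Because $M$ is non-trivial, $f$ is not constant, and $u$ is a positive, non-constant function.

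For the upper inequality $\lambda < C$, take a point $p$ where $u$ attains its global minimum. There $\nabla^2 u(p) \ge 0$. Tracing the identity above and using $u>0$ gives $R(p) \ge n\lambda$. This already forces $C \ge \lambda$.

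The lower inequality $c \le \lambda$ comes from the same argument at a maximum point $q$ of $u$. There $\nabla^2 u(q) \le 0$, so $\operatorname{Ric}(q) \le \lambda g$, and in particular $c \le \lambda$.

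The real work is upgrading these to strict inequalities. I will argue by contradiction, and handle $c = \lambda$ first. If $c=\lambda$, then $\operatorname{Ric} \ge \lambda g$ everywhere, so $\nabla^2 u \ge 0$ on all of $M$. Hence $u$ is subharmonic on a closed manifold, and the maximum principle (or integrating $\Delta u \ge 0$ to get $\Delta u \equiv 0$) shows $u$ is constant. This contradicts non-triviality.

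The case $C=\lambda$ is symmetric. Then $\operatorname{Ric} \le \lambda g$, so $\nabla^2 u \le 0$ and $u$ is superharmonic. Again $u$ must be constant, which is a contradiction.

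The step needing care is exactly this passage from the non-strict pointwise bounds to strictness, and the argument above handles it. The global sign of the Hessian, combined with compactness, forces $\Delta u \equiv 0$ and therefore $u$ constant. This avoids any delicate local analysis at the extremum points.

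Positivity of $K = \frac{\lambda-c}{m}$ and $H = \frac{C-\lambda}{m}$ then follows immediately from $c<\lambda<C$ and $m>0$.
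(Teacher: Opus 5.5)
Your proof is correct and follows essentially the same route as the paper. Both arguments take the sign of the equation at the extremal points of $f$ (equivalently of $u=e^{-f/m}$) to get the non-strict bounds, and both use the sub/superharmonicity of $u$ on a closed manifold to force strictness. The only cosmetic difference is that you work with the full Hessian identity $\nabla^2 u=\frac{u}{m}(\operatorname{Ric}-\lambda g)$, whereas the paper uses only its trace $\Delta u=\frac{1}{m}(R-n\lambda)u$ together with the traced quasi-Einstein equation at the extrema of $f$.
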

	
	\begin{proof}
		Taking the trace of the fundamental $m$-quasi-Einstein equation, we have:
		$$ R + \Delta f - \frac{1}{m}|\nabla f|^2 = n\lambda. $$
		At the global minimum point $p$ of $f$, we have $\nabla f(p) = 0$ and $\Delta f(p) \ge 0$. Thus, the trace equation yields $R(p) \le n\lambda$. Recalling that the bounds on the Ricci curvature imply $nc \le R \le nC$ throughout the manifold, we obtain $nc \le R(p) \le n\lambda$, which gives $c \le \lambda$. Similarly, at the global maximum point $q$, we find $R(q) \ge n\lambda$, which implies $\lambda \le C$. 
		
		We now show that these inequalities must be strict. Consider the auxiliary function $u = e^{-f/m}$. A direct computation yields:
		$$ \Delta u = -\frac{1}{m} u \left( \Delta f - \frac{1}{m}|\nabla f|^2 \right). $$
		Substituting the trace equation into the above identity, we obtain a linear equation for $u$:
		$$ \Delta u = \frac{1}{m}(R - n\lambda)u. $$
		
		Suppose by contradiction that $c = \lambda$. Since $R \ge nc$ everywhere, this implies $R \ge n\lambda$. Because $u > 0$, it follows that $\Delta u \ge 0$. Thus, $u$ is a subharmonic function on a compact manifold without boundary, which implies $u$ must be constant. Consequently, $f$ is constant, contradicting the non-triviality assumption. Therefore, $c < \lambda$.
		
		Similarly, assume $\lambda = C$. Since $R \le nC$ everywhere, we have $R \le n\lambda$, which forces $\Delta u \le 0$. This means $u$ is superharmonic on a compact manifold, and thus must be constant, again yielding a contradiction. 
		
		We conclude that $c < \lambda < C$.
	\end{proof}
	
	With this strict positivity established, we can now prove the diameter estimates.
	
	\begin{proof}[Proof of Theorem~\ref{Theorem3}]
		Let $p, q \in M$ be the points where $f$ attains its global minimum and global maximum, respectively, so that $f(p) = f_{\min}$ and $f(q) = f_{\max}$. At both extremal points, the gradient vanishes: $\nabla f(p) = \nabla f(q) = 0$.
		
		Evaluating the $m$-quasi-Einstein equation along a minimizing geodesic $\gamma(s)$ parameterized by arc length, we have:
		\begin{align*}
			f''(s) - \frac{1}{m}(f'(s))^2 &= \lambda - \operatorname{Ric}(\gamma', \gamma').
		\end{align*}
		We introduce the auxiliary function $u(s) = e^{-\frac{1}{m} f(\gamma(s))}$. Differentiating twice gives
		\begin{align}\label{EDOu}
			u''(s) &= -\frac{1}{m} \left( f''(s) - \frac{1}{m} (f'(s))^2 \right) u(s).
		\end{align}
		Substituting the previous equation we obtain:
		\begin{align*}
			u''(s) + \frac{\lambda - \operatorname{Ric}(\gamma',\gamma')}{m} u(s) &= 0.
		\end{align*}
		Since $u(s)>0$ and $c \le \operatorname{Ric}(v,v) \le C$, the previous equation provides differential inequalities for $u$ along $\gamma$.
		
		\bigskip
		
		\noindent \textbf{Case (1).} Let $\gamma:[0,L]\to M$ be a minimizing geodesic from $p$ to $q$, so that $L=d(p,q)\le d$, and define $K=\frac{\lambda-c}{m}$. Since $\operatorname{Ric}(\gamma',\gamma') \ge c$, from Equation~\eqref{EDOu} we obtain:
		\begin{align*}
			u''(s)+K\,u(s) &= \left(K-\frac{\lambda-\operatorname{Ric}(\gamma',\gamma')}{m}\right)u(s) \ge 0.
		\end{align*}
		If $L \ge \frac{\pi}{2\sqrt{K}}$, then the desired inequality (1) is immediate since
		\begin{align*}
			\arccos(e^{-f_{osc}/m}) < \frac{\pi}{2}
		\end{align*}
		for any non-constant potential function, because $0 < e^{-f_{osc}/m} < 1$. Thus we may assume $L < \frac{\pi}{2\sqrt{K}}$.
		
		Let $v$ be the solution of the initial value problem in $\left[0,\frac{\pi}{2\sqrt{K}}\right)$ given by
		\begin{align*}
			v''(s)+K\,v(s)=0, \qquad 
			v(0)=u(0), \qquad v'(0)=0.
		\end{align*}
		Then
		\begin{align*}
			v(s)=u(0)\cos(\sqrt{K}s).
		\end{align*} 
		Define the quotient $\phi(s)=\frac{u(s)}{v(s)}$. Since $v(s)>0$ for $s\in[0,L]$, this function is well-defined. A direct computation using the equations satisfied by $u$ and $v$ shows that:
		\begin{align*}
			\phi''(s) + 2\frac{v'(s)}{v(s)}\phi'(s)
			=
			\frac{u''+Ku}{v}
			\ge 0.
		\end{align*}
		Since $v(s) > 0$ for all $s \in [0, L]$, we can multiply both sides of the inequality above by $v(s)^2 > 0$ without changing the direction of the inequality, yielding
		\begin{align*}
			v(s)^2 \phi''(s) + 2v(s)v'(s)\phi'(s) \ge 0.
		\end{align*}
		By the product rule, we reorganize the left-hand side as the exact derivative of the function $v(s)^2 \phi'(s)$. Therefore, the expression becomes
		\begin{align*}
			\left( v(s)^2 \phi'(s) \right)' \ge 0.
		\end{align*}
		Integrating both sides from $0$ to $t$, with $t \in (0, L]$, we obtain
		\begin{align*}
			v(t)^2 \phi'(t) - v(0)^2 \phi'(0) &\ge 0.
		\end{align*}
		Since $\nabla f(p) = 0$, we have $u'(0) = 0$. Together with $v'(0) = 0$, this implies $\phi'(0) = 0$. Thus, the inequality reduces to
		\begin{align*}
			v(t)^2 \phi'(t) \ge 0.
		\end{align*}
		Since $v(t)^2 > 0$, we necessarily conclude that $\phi'(t) \ge 0, \text{ for all } t \in [0, L].$
		
		Since the derivative is non-negative, it follows that $\phi$ is a non-decreasing function on the considered interval. Furthermore, since $\phi(0) = \frac{u(0)}{v(0)} = 1$, we have $\phi(s) \ge 1$, which yields
		\begin{align*}
			u(s) \ge v(s).
		\end{align*}
		Evaluating at $s=L$ (point $q$) gives $e^{-f_{\max}/m} \ge e^{-f_{\min}/m}\cos(\sqrt{K}L)$, or equivalently,
		\begin{align*}
			e^{-f_{osc}/m} \ge \cos(\sqrt{K}L).
		\end{align*}
		Since $L < \frac{\pi}{2\sqrt{K}}$, the cosine term is positive. Applying the strictly decreasing $\arccos$ function we obtain $\arccos(e^{-f_{osc}/m}) \le \sqrt{K}L$, i.e.,
		\begin{align*}
			L \ge \sqrt{\frac{m}{\lambda-c}} \arccos\left(e^{-\frac{f_{osc}}{m}}\right).
		\end{align*}
		Since $d \ge L = d(p,q)$, the first estimate follows.
		
		\bigskip
		
		\noindent \textbf{Case (2).} Now let $H = \frac{C-\lambda}{m}$ and parameterize the minimizing geodesic so that $\gamma(0)=q$. Using $\operatorname{Ric}(\gamma', \gamma') \le C$, we obtain $u''(s) \le H u(s)$. The initial conditions are $u(0) = e^{-f_{\max}/m}$ and $u'(0) = 0$. Let $v$ be the solution of the initial value problem
		\begin{align*}
			v''(s) - H\,v(s) = 0, \qquad v(0) = u(0), \qquad v'(0) = 0.
		\end{align*}
		Then
		\begin{align*}
			v(s) = u(0)\cosh\left(\sqrt{H}s\right).
		\end{align*} 
		Defining the quotient $\phi(s) = \frac{u(s)}{v(s)}$, a direct computation using the equations satisfied by $u$ and $v$ yields:
		\begin{align*}
			\phi''(s) + 2\frac{v'(s)}{v(s)}\phi'(s) = \frac{u''(s) - H u(s)}{v(s)} \le 0,
		\end{align*}
		where we used the fact that $v(s) = u(0)\cosh(\sqrt{H}s) > 0$ for all $s$.
		Since $\cosh(x) > 0$ for all $x$, a similar argument (yielding $\phi'(t) \le 0$) implies $u(s) \le v(s)$ for all $s$. Evaluating at $s=L$ (point $p$), we obtain $e^{-f_{\min}/m} \le e^{-f_{\max}/m} \cosh\left(\sqrt{H}L\right)$, rearranging the terms
		\begin{align*}
			e^{\frac{f_{osc}}{m}} \le \cosh\left(\sqrt{H}L\right).
		\end{align*}
		Applying the strictly increasing $\operatorname{arccosh}$ function yields:
		\begin{align*}
			\operatorname{arccosh}\left(e^{\frac{f_{osc}}{m}}\right) \le \sqrt{H}L = \sqrt{\frac{C-\lambda}{m}}L.
		\end{align*}
		Since $d \ge L$, the second estimate follows.
	\end{proof}

	\begin{proof}[Proof of Proposition~\ref{Prop:MixedEstimate}]
		Let $\gamma : [0, L] \to M$ be a minimizing geodesic from $p$ to $q$, where $f$ attains its global minimum and maximum, respectively. Consider the midpoint $x = \gamma(L/2)$. We split $\gamma$ into two geodesic segments of length $L/2$: one from $p$ to $x$, and another from $q$ to $x$.
		
		Since $L \le d$ and the diameter bound is $d < \pi\sqrt{\frac{m}{\lambda - c}}$, the length of the segment from $p$ to $x$ strictly satisfies:
		\begin{equation*}
			\frac{L}{2} \le \frac{d}{2} < \frac{\pi}{2}\sqrt{\frac{m}{\lambda - c}}.
		\end{equation*}
		This ensures that the comparison argument from Theorem~\ref{Theorem3} is valid along the first segment $\gamma|_{[0, L/2]}$. Applying this estimate starting from $p$, we get:
		\begin{equation*}
			e^{-\frac{f(x) - f(p)}{m}} \ge \cos\left( \sqrt{\frac{\lambda - c}{m}} \frac{L}{2} \right).
		\end{equation*}
		Recalling that $f(p) = f_{\min}$ and taking the inverse of both sides, the inequality is reversed and the cosine becomes a secant, yielding the upper bound for the first half:
		\begin{equation}\label{eq:half1}
			e^{\frac{f(x) - f_{\min}}{m}} \le \sec\left( \sqrt{\frac{\lambda - c}{m}} \frac{L}{2} \right).
		\end{equation}
		Analogously, we apply the corresponding hyperbolic comparison estimate from Theorem \ref{Theorem3} to the second segment, parameterizing the geodesic from $q$ to $x$. Since $f(q) = f_{\max}$ and the gradient vanishes at $q$, this directly bounds the variation of $f$ on the second half by:
		\begin{equation}\label{eq:half2}
			e^{\frac{f_{\max} - f(x)}{m}} \le \cosh\left( \sqrt{\frac{C - \lambda}{m}} \frac{L}{2} \right).
		\end{equation}
		Multiplying inequalities \eqref{eq:half1} and \eqref{eq:half2}, the terms involving $f(x)$ cancel out in the exponent, yielding:
		\begin{equation*}
			e^{\frac{f_{\max} - f_{\min}}{m}} \le \sec\left( \sqrt{\frac{\lambda - c}{m}} \frac{L}{2} \right) \cosh\left( \sqrt{\frac{C - \lambda}{m}} \frac{L}{2} \right).
		\end{equation*}
		Noting that $f_{\max} - f_{\min} = f_{osc}$ and that $L \le d$, the strictly increasing monotonicity of both $\sec(t)$ and $\cosh(t)$ for non-negative arguments within the considered interval yields the desired estimate:
		\begin{equation*}
			e^{\frac{f_{osc}}{m}} \le \sec\left( \sqrt{\frac{\lambda - c}{m}} \frac{d}{2} \right) \cosh\left( \sqrt{\frac{C - \lambda}{m}} \frac{d}{2} \right).
		\end{equation*}
		This completes the proof.
	\end{proof}

	\begin{proof}[Proof of Corollary~\ref{CorTheorem3}]
		Consider a $4$-dimensional $m$-quasi-Einstein manifold $(M^4, g, f, \lambda)$ with $m > 1$. By assumption, the Hitchin--Thorpe inequality holds whenever $e^{f_{osc}/m} \le D_m.$ We shall show that each diameter constraint implies this condition.
		
		\textbf{Case (1):} Suppose $d \le \sqrt{\frac{m}{\lambda - c}} \arccos\left( \frac{1}{D_m} \right)$. From Theorem~\ref{Theorem3}, we have the diameter estimate:
		\begin{align*}
			d \ge \sqrt{\frac{m}{\lambda - c}} \arccos\left( e^{-\frac{f_{osc}}{m}} \right).
		\end{align*}
		Combining these inequalities, we obtain:
		\begin{align*}
			\sqrt{\frac{m}{\lambda - c}} \arccos\left( e^{-\frac{f_{osc}}{m}} \right) \le d \le \sqrt{\frac{m}{\lambda - c}} \arccos\left( \frac{1}{D_m} \right).
		\end{align*}
		Dividing by the constant factor, we have $\arccos\left( e^{-f_{osc}/m} \right) \le \arccos(1/D_m)$. Since the inverse cosine function is strictly decreasing on $[0, 1]$, applying the cosine function to both sides reverses the inequality, yielding $e^{-f_{osc}/m} \ge 1/D_m$. Inverting both sides, we conclude
		\begin{align*}
			e^{\frac{f_{osc}}{m}} \le D_m.
		\end{align*}
		Thus, the Hitchin--Thorpe inequality holds.
		
		\textbf{Case (2):} Suppose $d \le \sqrt{\frac{m}{C - \lambda}} \operatorname{arccosh}(D_m)$. Using the second estimate from Theorem~\ref{Theorem3}:
		\begin{align*}
			\sqrt{\frac{m}{C - \lambda}} \operatorname{arccosh}\left( e^{\frac{f_{osc}}{m}} \right) \le d \le \sqrt{\frac{m}{C - \lambda}} \operatorname{arccosh}(D_m).
		\end{align*}
		It follows that $\operatorname{arccosh}\left( e^{f_{osc}/m} \right) \le \operatorname{arccosh}(D_m)$. Since the inverse hyperbolic cosine is strictly increasing for arguments $\ge 1$, we maintain the inequality direction:
		\begin{align*}
			e^{\frac{f_{osc}}{m}} \le D_m,
		\end{align*}
		which ensures the Hitchin--Thorpe inequality.
		
		\textbf{Case (3):} Let $H(x) = \cosh\left( \sqrt{\frac{C - \lambda}{m}} x \right) \sec\left( \sqrt{\frac{\lambda - c}{m}} x \right)$. In the interval $\left(0, \frac{\pi}{2}\sqrt{\frac{m}{\lambda - c}}\right)$, $H(x)$ is the product of two positive, strictly increasing functions, and thus $H(x)$ is strictly increasing. 
		From Proposition~\ref{Prop:MixedEstimate}, we have $e^{f_{osc}/m} \le H(d/2).$ By hypothesis, $d \le 2x_0$, which implies $d/2 \le x_0$. Due to the monotonicity of $H$, we have:
		\begin{align*}
			e^{\frac{f_{osc}}{m}} \le H(d/2) \le H(x_0).
		\end{align*}
		Since $x_0$ is the unique solution to $H(x_0) = D_m$, we conclude $e^{f_{osc}/m} \le D_m$, and the result follows.
	\end{proof}
	
	\begin{proof}[Proof of Theorem~\ref{Theorem.Est.Vol}]
		As shown in \cite{Case2011}, the scalar curvature $R$ is strictly positive. For a compact oriented $4$-manifold with positive scalar curvature, a result by Gursky \cite{Gursky1994} (see also \cite{Seshadri2006}) establishes that:
		\begin{align}\label{Eq.1Gursky}
			8\pi^2(\chi(M) - 2) \leq \int_M |W^+|^2 dV_g \leq \int_M |W|^2 dV_g,
		\end{align}
		where the first equality holds if and only if $M$ is conformally equivalent to a sphere. Rearranging this inequality yields:
		\begin{align*}
			16\pi^2 \geq 8\pi^2\chi(M) - \int_M |W|^2 dV_g.
		\end{align*}
		Applying the estimate from Theorem~\ref{theorem2} directly to the right-hand side, we obtain:
		\begin{align*}
			16\pi^2 \geq \frac{m^2\lambda^2}{6(m - 1)(m + 3)} \left( 5 + \frac{8}{m} - \frac{12}{m^2} - e^{\frac{f_{osc}(m + 2)}{m}} \right) \vol(M).
		\end{align*}
		Multiplying both sides by $6/\lambda^2$ yields the volume estimate \eqref{Eq.1Teo4}. If equality holds, $M$ is conformally equivalent to the round sphere, making $f$ constant and $W = 0$. Consequently, $M$ is Einstein with $R = 4\lambda$, meaning it has constant sectional curvature $\lambda/3$, which identifies it as a sphere of radius $\sqrt{3/\lambda}$.
		
		To prove the Yamabe invariant estimate \eqref{Eq.2Teo4}, recall the inequality by Cheng, Ribeiro, and Zhou \cite{ChengRibeiroZhou2023}:
		\begin{align}\label{Ineq.Yamabe.ChengRibZh}
			\mathcal{Y}(M, [g])^2 \geq \int_M (R^2 - 12|\mathring{\operatorname{Ric}}|^2) dV_g.
		\end{align}
		By the generalized Chern-Gauss-Bonnet formula for $4$-manifolds, we have the identity:
		\begin{align*}
			8\pi^2\chi(M) - \int_M |W|^2 dV_g = \frac{1}{24} \int_M (R^2 - 12|\mathring{\operatorname{Ric}}|^2) dV_g.
		\end{align*}
		Substituting this identity into \eqref{Ineq.Yamabe.ChengRibZh}, we get:
		\begin{align*}
			\mathcal{Y}(M, [g])^2 \geq 24 \left( 8\pi^2\chi(M) - \int_M |W|^2 dV_g \right).
		\end{align*}
		Invoking Theorem~\ref{theorem2} once more to bound the term in parentheses, we obtain:
		\begin{align*}
			\mathcal{Y}(M, [g])^2 \geq 24 \left[ \frac{m^2\lambda^2}{6(m - 1)(m + 3)} \left( 5 + \frac{8}{m} - \frac{12}{m^2} - e^{\frac{f_{osc}(m + 2)}{m}} \right) \vol(M) \right].
		\end{align*}
		Simplifying the leading constant ($24/6 = 4$) yields exactly inequality \eqref{Eq.2Teo4}. If equality holds, it forces equality in Theorem~\ref{theorem2}, implying that $f$ is constant, and therefore $M$ is Einstein.
	\end{proof}



\begin{thebibliography}{99}
		\bibitem{bakry1985}
		Bakry, D. and \'Emery, M., \textit{Diffusions hypercontractives}, S\'eminaire de Probabilit\'es XIX 1983/84, Lecture Notes in Math. \textbf{1123}, Springer, Berlin, (1985), 177--206.
		
		\bibitem{Barros2012}
		Barros, A. and Ribeiro Jr., E., \textit{Integral formulae on quasi-Einstein manifolds and applications}, Glasg. Math. J. \textbf{54} (2012), 213--223.
		
		\bibitem{barros2014}
		Barros, A., Ribeiro Jr., E., and Silva, J., \textit{Uniqueness of quasi-Einstein metrics on 3-dimensional homogeneous manifolds}, Diff. Geom. Appl. \textbf{35} (2014), 60--73.
		
		\bibitem{Batat2015}
		Batat, W., Hall, S. J., Jizany, A., and Murphy, T., \textit{Conformally K\"{a}hler geometry and quasi-Einstein metrics}, M\"{u}nster J. Math. \textbf{8} (2015), 211--228.
		
		\bibitem{Besse}
		Besse, A. L., \textit{Einstein manifolds}, Classics in Mathematics, Springer-Verlag, Berlin, 2008.
		
		\bibitem{bohm1998}
		Böhm, C., \textit{Inhomogeneous Einstein metrics on low-dimensional spheres and other low-dimensional spaces}, Invent. Math. \textbf{134} (1998), 145--176.
		
		\bibitem{bohm1999}
		Böhm, C., \textit{Non-compact cohomogeneity one Einstein manifolds}, Bull. Soc. Math. France \textbf{127} (1999), 135--177.
		
		\bibitem{brasil2014}
		Brasil, A., Costa, E., and Ribeiro Jr., E., \textit{Hitchin--Thorpe inequality and Kaehler metrics for compact almost Ricci soliton}, Ann. Mat. Pura Appl. (4) \textbf{193} (2014), no. 6, 1851--1860.
		
		\bibitem{cao2010}
		Cao, H.-D., \textit{Recent progress on Ricci solitons}, Adv. Lect. Math. \textbf{11} (2010), 1--38.
		
		\bibitem{Case2011}
		Case, J., Shu, Y.-J., and Wei, G., \textit{Rigidity of quasi-Einstein metrics}, Differ. Geom. Appl. \textbf{29} (2011), 93--100.
		
		\bibitem{catino2012}
		Catino, G., \textit{A note on four-dimensional (anti-)self-dual quasi-Einstein manifolds}, Diff. Geom. Appl. \textbf{30} (2012), 660--664.
		
		\bibitem{catino2013}
		Catino, G., Mantegazza, C., Mazzieri, L., and Rimoldi, M., \textit{Locally conformally flat quasi-Einstein manifolds}, J. Reine Angew. Math. (Crelle's Journal) \textbf{675} (2013), 181--189.
		
		\bibitem{Cheng2020}
		Cheng, X., Ribeiro Jr., E., and Zhou, D., \textit{Volume growth estimates for Ricci solitons and quasi-Einstein manifolds}, J. Geom. Anal. \textbf{32} (2022), 62.
		
		\bibitem{ChengRibeiroZhou2023}
		Cheng, X., Ribeiro Jr., E., and Zhou, D., \textit{On Euler characteristic and Hitchin--Thorpe inequality for compact gradient Ricci solitons}, Proc. Amer. Math. Soc. \textbf{10} (2023), 33--45.
		
		\bibitem{colding2021}
		Colding, T. and Minicozzi, W., \textit{Optimal growth bounds for eigenfunctions}, arXiv preprint arXiv:2109.04998 (2021).
		
		\bibitem{Costa2025}
		Costa, J., Ribeiro Jr., E., and Santos, M., \textit{On quasi-Einstein manifolds with constant scalar curvature}, arXiv preprint arXiv:2505.18834 (2025).
		
		\bibitem{Deng2021}
		Deng, Y., \textit{Diameter estimate for a class of compact generalized quasi-Einstein manifolds}, Turk. J. Math. \textbf{45} (2021), 2331--2340.
		
		\bibitem{Deng2024}
		Deng, Y., \textit{Diameter estimate for generalized $m$-quasi-Einstein manifolds}, Ital. J. Pure Appl. Math. \textbf{52} (2024), 9--17.
		
		\bibitem{diogenes2022compact}
		Di\'ogenes, R. and Gadelha, T., \textit{Compact quasi-Einstein manifolds with boundary}, Math. Nachr. \textbf{295} (2022), 1690--1708.
		
		\bibitem{diogenes2022remarks}
		Di\'ogenes, R., Gadelha, T., and Ribeiro Jr., E., \textit{Remarks on quasi-Einstein manifolds with boundary}, Proc. Amer. Math. Soc. \textbf{150} (2022), 351--363.
		
		\bibitem{Diogenes2025}
		Di\'{o}genes, R., Gon\c{c}alves, J., and Ribeiro Jr., E., \textit{Geometric inequalities for quasi-Einstein manifolds}, Ann. Mat. Pura Appl. (2025).
		
		\bibitem{FernandezLopez2010}
		Fern\'{a}ndez-L\'{o}pez, M. and Garc\'{i}a-R\'{i}o, E., \textit{Diameter bounds and Hitchin--Thorpe inequalities for compact Ricci solitons}, Quart. J. Math. \textbf{61} (2010), 319--327.
		
		\bibitem{Gursky1994}
		Gursky, M. J., \textit{Locally conformally flat four- and six-manifolds of positive scalar curvature and positive Euler characteristic}, Indiana Univ. Math. J. \textbf{43} (1994), 747--774.
		
		\bibitem{hamilton1995}
		Hamilton, R., \textit{The formation of singularities in the Ricci flow}, Surveys in differential geometry, Vol. II (Cambridge, MA, 1993), Int. Press, Cambridge, MA, (1995), 7--136.
		
		\bibitem{He2012}
		He, C., Petersen, P., and Wylie, W., \textit{On the classification of warped product Einstein metrics}, Comm. Anal. Geom. \textbf{20} (2012), 271--311.
		
		\bibitem{he2014warped}
		He, C., Petersen, P., and Wylie, W., \textit{Warped product Einstein metrics over spaces with constant scalar curvature}, Asian J. Math. \textbf{18} (2014), 159--190.
		
		\bibitem{Hitchin1974}
		Hitchin, N., \textit{Compact Four-Dimensional Einstein Manifolds}, J. Differ. Geom. \textbf{9} (1974), 435--441.
		
		\bibitem{kim2003}
		Kim, D.-S. and Kim, Y. H., \textit{Compact Einstein warped product spaces with nonpositive scalar curvature}, Proc. Amer. Math. Soc. \textbf{131} (2003), 2573--2576.
		
		\bibitem{koiso1993}
		Koiso, N., \textit{On Rotationally Symmetric Hamilton's Equation for K\"{a}hler–Einstein Metrics}, Proc. Sympos. Pure Math. \textbf{54} (1993), 327--337.
		
		\bibitem{Limoncu2010}
		Limoncu, M., \textit{Modifications of the Ricci tensor and applications}, Arch. Math. \textbf{95} (2010), 191--199.
		
		\bibitem{LuPagePope2004}
		L\"{u}, H., Page, D. N., and Pope, C. N., \textit{New inhomogeneous Einstein metrics on sphere bundles over Einstein-K\"{a}hler manifolds}, Phys. Lett. B \textbf{593} (2004), 218--226.
		
		\bibitem{lima2013}
		Ma, L., \textit{Remarks on compact shrinking Ricci solitons of dimension four}, C. R. Math. Acad. Sci. Paris \textbf{351} (2013), 817--823.
		
		\bibitem{mastrolia2014}
		Mastrolia, P. and Rimoldi, M., \textit{Some triviality results for quasi-Einstein manifolds and Einstein warped products}, Geom. Dedic. \textbf{169} (2014), 225--237.
		
		\bibitem{qian1997}
		Qian, Z., \textit{Estimates for weighted volumes and applications}, Quart. J. Math. \textbf{48} (1997), 235--242.
		
		\bibitem{ribeiro2021}
		Ribeiro Jr., E. and Tenenblat, K., \textit{Noncompact quasi-Einstein manifolds conformal to a Euclidean space}, Math. Nachr. \textbf{294} (2021), no. 1, 132--144.
		
		\bibitem{rimoldi2011}
		Rimoldi, M., \textit{A remark on Einstein warped products}, Pacific J. Math. \textbf{252} (2011), 207--218.
		
		\bibitem{Seshadri2006}
		Seshadri, H., \textit{Weyl curvature and the Euler characteristic in dimension four}, Differ. Geom. Appl. \textbf{24} (2006), 172--177.
		
		\bibitem{Shaikh2024}
		Shaikh, A. A., Mandal, P., and Mondal, C. K., \textit{Diameter estimation of $(m,p)$-quasi Einstein manifolds}, Proc. Natl. Acad. Sci. India Sect. A Phys. Sci. \textbf{94} (2024), 513--518.
		
		\bibitem{tadano2022riccati}
		Tadano, H., \textit{$m$-Bakry–Émery Ricci curvatures, Riccati inequalities, and bounded diameters}, Differ. Geom. Appl. \textbf{80} (2022), 101832.
		
		\bibitem{tadano2018}
		Tadano, H., \textit{An upper diameter bound for compact Ricci solitons with application to the Hitchin--Thorpe inequality. II}, J. Math. Phys. \textbf{59} (2018), no. 4, 043507, 3.
		
		\bibitem{tadano2025improvement}
		Tadano, H., \textit{An improvement of the Myers theorem via $m$-Bakry–Émery Ricci curvature with $\varepsilon$-range}, Ann. Mat. Pura Appl. (4) \textbf{204} (2025), no. 6, 2757--2769.
		
		\bibitem{tadano2025myers}
		Tadano, H., \textit{Myers-type theorems via $m$-Bakry–Émery Ricci curvature of quadratic decays}, Internat. J. Math. \textbf{36} (2025), no. 14, 2450059.
		
		\bibitem{tadano2026compactness}
		Tadano, H., \textit{New Compactness Criteria via Integral Radial $m$-Bakry–Émery Ricci Curvatures}, Bull. Sci. Math. (2026), 103837.
		
		\bibitem{tadano2020compactness}
		Tadano, H., \textit{Some Cheeger--Gromov--Taylor type compactness theorems via $m$-Bakry–Émery and $m$-modified Ricci curvatures}, Nonlinear Anal. \textbf{199} (2020), 112045.
		
		\bibitem{thorpe1969}
		Thorpe, J. A., \textit{Some remarks on the Gauss-Bonnet integral}, J. Math. Mech. \textbf{18} (1969), 779--786.
		
		\bibitem{wang2011}
		Wang, L., \textit{On noncompact $m$-quasi-Einstein metrics}, Pacific J. Math. \textbf{254} (2011), 449--464.
		
		\bibitem{Wang2013}
		Wang, L. F., \textit{Diameter estimate for compact quasi-Einstein metrics}, Math. Z. \textbf{273} (2013), 801--809.
		
		\bibitem{wang2014}
		Wang, L. F., \textit{Potential function estimates for quasi-Einstein metrics}, J. Funct. Anal. \textbf{267} (2014), 1986--2004.
		
		\bibitem{WangZhu2004}
		Wang, X.-J. and Zhu, X., \textit{K\"{a}hler–Ricci solitons on toric manifolds with positive first Chern class}, Adv. Math. \textbf{188} (2004), 87--103.
		
		\bibitem{WeiWylie2007}
		Wei, G. and Wylie, W., \textit{Comparison Geometry for the Smooth Metric Measure Spaces}, Proc. 4th Int. Congr. Chin. Math. (ICCM) \textbf{II} (2007), 1--4.
		
		\bibitem{Wu2018}
		Wu, J.-Y., \textit{Myers' type theorem with the Bakry-\'{E}mery Ricci tensor}, Ann. Glob. Anal. Geom. \textbf{54} (2018), 541--549.
		
		\bibitem{zhang2012}
		Zhang, Y., \textit{A note on the Hitchin--Thorpe inequality and Ricci flow on 4-manifolds}, Proc. Amer. Math. Soc. \textbf{140} (2012), no. 5, 1777--1783.
	\end{thebibliography}
\end{document}